\newcommand{\dd}{\,{\rm d}}
\newcommand{\e}{\epsilon}
\newcommand{\s}[1]{{\mathcal #1}}
\newcommand{\bb}[1]{{\mathbb #1}}
\newtheorem{theorem}{Theorem}[section]
\newtheorem{proposition}[theorem]{Proposition}
\newtheorem{lemma}[theorem]{Lemma}
\newtheorem{corollary}[theorem]{Corollary}
\theoremstyle{definition}
\theoremstyle{remark}
\newtheorem{remark}[theorem]{Remark}
\numberwithin{equation}{section}
\begin{document}

\title[Variational mean field games for market competition]
{Variational mean field games for market competition}

\author{P. Jameson Graber}
\thanks{National Science Foundation under NSF Grant DMS-1612880.}
\address{P. J. Graber: Baylor University, Department of Mathematics;
One Bear Place \#97328;
Waco, TX 76798-7328 \\
Tel.: +1-254-710- \\
Fax: +1-254-710-3569 
}
\email{Jameson\_Graber@baylor.edu}

\author{Charafeddine Mouzouni}
\thanks{French National Research Agency under ANR-10-LABX-0070, ANR-11-IDEX-0007 and ANR-16-CE40-0015-01-MFG.}
\address{C. Mouzouni: Univ Lyon, \'Ecole centrale de Lyon, CNRS UMR 5208, Institut Camille Jordan, 36 avenue Guy de Collonge, F-69134 Ecully Cedex, France.}
\email{mouzouni{@}math.univ-lyon1.fr}

\dedicatory{Version: \today}
\subjclass[2010]{35K61, 35Q91}

\begin{abstract}
In this paper, we explore Bertrand and Cournot Mean Field Games models for market competition with reflection boundary conditions. We prove existence, uniqueness and regularity of solutions to the system of equations, and show that this system can be written as an optimality condition of a convex minimization problem. 
We also provide a short proof of uniqueness to the system addressed in [Graber, P. and Bensoussan, A., \textit{Existence and uniqueness of solutions for Bertrand and Cournot mean field games,} Applied Mathematics \& Optimization (2016)], where uniqueness was only proved for small parameters $\e$.
Finally, we prove existence and uniqueness of a weak solutions to the corresponding first order system at the deterministic limit.
\end{abstract}

\keywords{mean field games, Hamilton-Jacobi, Fokker-Planck, coupled systems, optimal control, nonlinear partial differential equations.}

\maketitle

\section{Introduction} \label{sec:intro}
Our purpose is to study the following coupled system of partial differential equations:
\begin{equation}
\label{main system}
\left\{
\begin{array}{rcc}
(i)& u_t + \frac{\sigma^2}{2} u_{xx} - ru + G(u_x,m)^2 = 0,\quad & 0 < t < T, \ 0 < x < L\\
(ii)&m_t -  \frac{\sigma^2}{2} m_{xx} - \left\{G(u_x,m)m\right\}_x = 0, \quad & 0 < t < T, \ 0 < x < L\\
(iii) & m(0,x) = m_0(x), u(T,x) = u_T(x), \quad & 0 \leq x \leq L\\
(iv) & u_x(t,0) = u_x(t,L) = 0,\quad & 0 \leq t \leq T \\ 
(v) & \frac{\sigma^2}{2} m_x(t,x) + G(u_x,m)m(t,x) = 0,\quad & 0 \leq t \leq T, x \in \{0,L\}
\end{array}\right.
\end{equation}
where 
$
G(u_x,m) := \frac{1}{2}\left( b+c\int_{0}^{L} u_x(t,y)m(t,y)\dd y- u_{x}\right),
$
$\sigma,b,c,T,L$ are given positive constants, and $m_0(x),u_T(x)$ are known functions.

System \eqref{main system} is in the family of models introduced by Gu\'eant, Lasry, and Lions \cite{gueant2011mean} as well as by Chan and Sircar in \cite{chan2015bertrand,chan2015fracking} to describe a \emph{mean field game} in which producers compete to sell an exhaustible resource such as oil. 
The basic notion of mean field games (MFG) was introduced by Lasry and Lions \cite{lasry06,lasry06a,lasry07} and Caines, Huang, and Malham\'e \cite{huang2006large}.
Here we view the producers as a continuum of rational agents whose is given by the function $m(t,x)$ governed by a Fokker-Planck equation.
Each of them must solve an optimal control problem in order to optimize profit, which corresponds to the Hamilton-Jacobi-Bellman equation \eqref{main system}(i).
A solution to the coupled system therefore corresponds (formally) to a Nash equilibrium among infinitely many competitors in the market.

The analysis of this type of PDE system was already addressed in \cite{graber2016existence} with Dirichlet boundary conditions at $x=0$.
It is  a framework where producers have limited stock, and they leave the market as soon as their stock is exhausted. 
In particular, the density of players is a non-increasing function \cite{graber2016existence}.
By contrast, in studying system \eqref{main system} we explore a new boundary condition.
In terms of the model, we assume that players never leave the game so that the number of producers in the market remains constant. In this particular case, the density of players is a probability density for all the times, which considerably simplifies the analysis of the system of equations.
Further details on the interpretation of the problem will be given below in Section \ref{sec:specification}.

Applications of mean field games to economics have attracted much recent interest; see \cite{achdou2014partial,burger2014partial,gomes2014socio} for surveys of the topic.
Nevertheless, most results from the PDE literature on mean field games are not sufficient to establish well-posedness for models of market behavior such as \eqref{main system}.
In particular, many authors have studied existence and uniqueness of solutions to systems of the type
\begin{equation} \label{eq:mfg classical}
\begin{array}{c}
u_t + \frac{1}{2}\sigma^2 u_{xx} - ru + H(t,x,u_x) = V[m],\\
m_t - \frac{1}{2}\sigma^2 m_{xx} - \left(G(t,x,u_x)m\right)_x = 0.
\end{array}
\end{equation}
See, for example,  \cite{porretta2015weak,cardaliaguet2015second,cardaliaguet2015weak,cardaliaguet2014mean,cardaliaguet2012long,cardaliaguet2013long2,gomes2016time,gomes2015time,gomes2015time2}.
In all of these references, the equilibrium condition is determined solely through the distribution of the state variable, rather than that of the control.
That is, each player faces a cost determined by the distribution of positions, but not decisions, of other players.
For economic production models, by contrast, players must optimize against a cost determined by the distribution of \emph{controls}, since the market price is determined by aggregating all the prices (or quantities) set by individual firms.
A mathematical framework which takes this assumption into account has been called both ``extended mean field games" \cite{gomes2014extended,gomes2016extended} and ``mean field games of controls" \cite{cardaliaguet2016mfgcontrols}.
However, other than the results of \cite{gomes2014extended,gomes2016extended,cardaliaguet2016mfgcontrols}, there appear to be few existence and uniqueness theorems for PDE models of this type.
One of the main difficulties appears to be that the coupling is inherently nonlocal, a feature which is manifest in \eqref{main system} through the integral term $\int_0^L u_x m \dd x$.

Inspired by \cite{graber2016existence}, our goal in this article is to prove the existence and uniqueness of solutions to \eqref{main system}.
Because of the change in boundary conditions, many of the arguments becomes considerably simpler and stronger results are possible.
Let us now outline our main results.
We show in Section \ref{sec:analysis of the system} that there exists a unique classical solution of System \eqref{main system}.
Note that, whereas in \cite{graber2016existence}, uniqueness was only proved for small values of $\epsilon : = 2c/(1-c)$  (cf.~the interpretation in the following subsection), here we improve that result by showing that solutions are unique for all values of $\epsilon$ (including in the case of Dirichlet boundary conditions).
We show in Section \ref{sec:optlctrl} that \eqref{main system} has an interpretation as a system of optimality for a convex minimization problem.
Although this feature has been noticed and exploited for mean field games with congestion penalization (see \cite{benamou2016variational} for an overview),
here we show that it is also true for certain extended mean field games (cf.~\cite{graber2016linear}).
Finally, in Section \ref{sec:first-order} we give an existence result for the first order case where $\sigma = 0$, using a ``vanishing viscosity" argument by collecting a priori estimates from Sections \ref{sec:analysis of the system} and \ref{sec:optlctrl}.

\subsection{Explanation of the model} \label{sec:specification}

We summarize the interpretation of \eqref{main system} as follows.
Let $t$ be time and $x$ be the producer's capacity.
We assume there is a large set of producers and represent it as a continuum. 

The first equation in \eqref{main system} is the Hamilton-Jacobi-Bellman (HJB) equation for the maximization of profit.
Each producer's capacity is driven by a stochastic differential equation
\begin{equation}
dX(s) = -q(s)ds + \sigma \ dW(s),
\end{equation}
where $q$ is determined by the price $p$ through a linear demand schedule
\begin{equation}
\label{demand function}
q = D(p,\bar{p}) = \frac{1}{1+\epsilon} - p + \frac{\epsilon}{1+\epsilon}\bar{p}, \quad \eta > 0.
\end{equation}
The presence of noise expresses the the short term unpredictable fluctuations of the demand \cite{chan2015bertrand}.
In \eqref{demand function} $\bar{p}$ represents the market price, that is, the average price offered by all producers; and $\epsilon$ is the product substitutability, with $\epsilon = 0$ corresponding to independent goods and $\epsilon = +\infty$ implying perfect substitutability.
Thus each producer competes with all the others by responding to the market price. 

We define the value function
\begin{equation} \label{optimal control problem}
u(t,x) := \sup_{p} \bb{E} \left\{
\int_t^T e^{-r(s-t)}p(s)q(s) ds + e^{-r(T-t)} u_T(X(T)) \ | \  X(t) = x \right\}
\end{equation}
where $q(s)$ is given in terms of $p(s)$ by \eqref{demand function}.
The optimization problem \eqref{optimal control problem} has the corresponding Hamilton-Jacobi-Bellman equation
\begin{equation}
\label{bellman}
u_t + \frac{1}{2}\sigma^2 u_{xx} - ru + \max_{p } \left[\left(\frac{1}{1+\epsilon} - p + \frac{\epsilon}{1+\epsilon}\bar{p}(t)\right)(p-u_x)\right] = 0.
\end{equation}
The optimal $p^*(t,x)$ satisfies the first order condition
\begin{equation} \label{def equilibrium price}
p^*(t,x) = \frac{1}{2}\left(\frac{1}{1+\epsilon} + \frac{\epsilon}{1+\epsilon}\bar{p}(t) + u_x(t,x) \right),
\end{equation}
and we take $q^*(t,x)$ to be the corresponding demand
\begin{equation} \label{optimal control}
q^*(t,x) = \frac{1}{2}\left(\frac{1}{1+\epsilon} + \frac{\epsilon}{1+\epsilon}\bar{p}(t) - u_x(t,x) \right).
\end{equation}
Therefore \eqref{bellman} becomes
\begin{equation}
\label{bellman2}
u_t + \frac{1}{2}\sigma^2 u_{xx} - ru + \frac{1}{4} \left(\frac{1}{1+\epsilon} + \frac{\epsilon}{1+\epsilon}\bar{p}(t)-u_x\right)^2 = 0.
\end{equation}

On the other hand, the density of producers $m(t,x)$ is transported by the optimal control \eqref{optimal control}; it is governed by the Fokker-Planck equation
\begin{equation}
\label{fokker-planck}
m_t - (\frac{1}{2}\sigma^2 m)_{xx} - \frac{1}{2} \left(\left(\frac{1}{1+\epsilon} + \frac{\epsilon}{1+\epsilon}\bar{p}(t)-u_x\right)m\right)_x = 0.
\end{equation}
The coupling takes place through a market clearing condition.
With $p^*(t,x)$ the Nash equilibrium price we must have
\begin{equation} \label{def average price}
\bar{p}(t) = \int_0^L p^*(t,x)m(t,x)\dd x,
\end{equation}
which, thanks to \eqref{def equilibrium price}, can be rewritten
\begin{equation}
\label{average price}
\bar{p}(t) = \frac{1}{2+\epsilon} + \frac{1+\epsilon}{2+\epsilon}\int_0^L u_x(t,x)m(t,x)\dd x.
\end{equation}
We recover System \ref{main system} by setting
\begin{equation}
\label{parameters}
b = \frac{2}{2+\epsilon}, \ c = \frac{\epsilon}{2+\epsilon}.
\end{equation}
\textbf{Boundary conditions.} 
We assume that the maximum capacity of all producers does not exceed $L> 0$.
We consider a situation where players are able to renew their stock after exhaustion, so that players stay all the time with a non empty stock. For the sake of simplicity, we do not consider the implications of stock renewal on the cost structure.
This situation entails a reflection boundary condition at $x=0$ instead of an absorbing  boundary condition. Therefore, we consider Neumann boundary conditions at $x=0$ and $x=L$.

\subsection{Notation and assumptions}

Throughout this article we define $Q_T := (0,T) \times (0,L)$ to be the domain, $S_T := ([0,T] \times \{0,L\}) \cup (\{T\} \times [0,L])$ to be the parabolic boundary, and at times $\Gamma_T := ([0,T] \times \{0\}) \cup (\{T\} \times [0,L])$ to be the parabolic half-boundary.
For any domain $X$ in $\bb{R}$ or $\bb{R}^2$ we define $L^p(X)$, $p \in [1,+\infty]$ to be the Lebesgue space of $p$-integrable functions on $X$; $C^0(X)$ to be the space of all continuous functions on $X$; $C^{\alpha}(X)$, $0 < \alpha < 1$ to be the space of all H\"older continuous functions with exponent $\alpha$ on $X$; and $C^{n+\alpha}(X)$ to be the set of all functions whose $n$ derivatives are all in $C^\alpha(X)$.
For a subset $X \subset \overline{Q_T}$ we also define $C^{1,2}(X)$ to be the set of all functions on $X$ which are locally continuously differentiable in $t$ and twice locally continuously differentiable in $x$.
By $C^{\alpha/2,\alpha}(X)$ we denote the set of all functions which are locally H\"older continuous in time with exponent $\alpha/2$ and in space with exponent $\alpha$.

We will denote by $C$ a {\em generic} constant, which depends only on the data (namely $u_T,m_0,L,T,\sigma,r$ and $\epsilon$).
Its precise value may change from line to line.

Throughout we take the following assumptions on the data :
\begin{enumerate}
\item $u_{T}$ and $m_{0}$ are function in $C^{2+\gamma}([0,L])$ for some $\gamma >0$.
\item $u_{T}$ and $m_{0}$ satisfy compatible boundary conditions  : $u'_{T}(0)=u'_{T}(L)=0$ and $m_{0}(0)=m'_{0}(0)=m_{0}(L)=m'_{0}(L)=0$.
\item $m_{0}$ is probability density.
\item $u_{T}\geq 0$.
\end{enumerate}
%

\section{Analysis of the system} \label{sec:analysis of the system}
In this section we give a proof of existence and uniqueness for system \eqref{main system}. Note that most results of  this section are an adaptation of those of \cite[section 2]{graber2016existence}.  However, unlike the case addressed in \cite{graber2016existence}, we provide uniform bounds on $u$ and $u_{x}$ which do not depend on $\sigma$.
We start by providing some a priori bounds on solutions to \eqref{main system}, then we prove existence and uniqueness using the Leray-Schauder fixed point theorem. 

Let us start with some basic properties of the solutions.

\begin{proposition}\label{prop21}
Let $(u,m)$ be a pair of smooth solutions to \eqref{main system}. Then, for all $t\in [0,T]$, $m(t)$ is a probability density, and
\begin{equation}\label{estim-1-prop21}
u(t,x)\geq 0\quad \forall t\in[0,T], \forall x\in[0,L]. 
\end{equation}
Moreover, for some constant $C>0$ depending on the data, we have
\begin{equation}\label{estim-2-prop21}
\int_{0}^{T}\int_{0}^{L}m u_{x}^{2} \leq C.
\end{equation}
\end{proposition}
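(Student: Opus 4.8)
The plan is to prove the three assertions in the order stated, since each uses the previous ones. For the fact that $m(t)$ is a probability density, I would first integrate \eqref{main system}(ii) in $x$: the right-hand side is $\partial_x\big(\tfrac{\sigma^2}{2}m_x+Gm\big)$, whose integral over $(0,L)$ is $\big[\tfrac{\sigma^2}{2}m_x+Gm\big]_0^L=0$ by \eqref{main system}(v), so $\frac{d}{dt}\int_0^L m(t,\cdot)=0$ and $\int_0^L m(t,\cdot)=\int_0^L m_0=1$. For $m\geq 0$ I would argue by duality rather than a bare maximum principle, since the coefficients in \eqref{main system}(v) have no definite sign: for fixed $\tau\in(0,T]$ and arbitrary $\psi_\tau\geq 0$, let $\psi$ solve the (zeroth-order-free) adjoint problem $\psi_t+\tfrac{\sigma^2}{2}\psi_{xx}-G\psi_x=0$ on $(0,\tau)$ with $\psi(\tau)=\psi_\tau$ and $\psi_x(t,0)=\psi_x(t,L)=0$; then $\psi\geq 0$ by the maximum principle, and a short integration by parts using \eqref{main system}(v) and the Neumann condition on $\psi$ gives $\frac{d}{dt}\int_0^L m\psi=0$, whence $\int_0^L m(\tau)\psi_\tau=\int_0^L m_0\,\psi(0)\geq 0$; since $\psi_\tau\geq 0$ is arbitrary, $m(\tau,\cdot)\geq 0$. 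For \eqref{estim-1-prop21}, note \eqref{main system}(i) reads $u_t+\tfrac{\sigma^2}{2}u_{xx}-ru=-G^2\leq 0$, so (after removing the zeroth-order term via $w=e^{r(T-t)}u$) $w$ is a supersolution of the backward heat equation with $w(T)=u_T\geq 0$ and Neumann data; the weak minimum principle yields $u\geq 0$ on $\overline{Q_T}$.

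For the energy bound \eqref{estim-2-prop21}, the idea is to test \eqref{main system}(i) against $m$, \eqref{main system}(ii) against $u$, integrate over $(0,L)$ and add. The terms $\big[u_xm\big]_0^L$ vanish by \eqref{main system}(iv), and the remaining boundary contributions combine (up to sign) into $u\big(\tfrac{\sigma^2}{2}m_x+Gm\big)$ evaluated at $x\in\{0,L\}$, which vanishes by \eqref{main system}(v). What survives is
\begin{equation*}
\frac{d}{dt}\int_0^L u\,m \dd x - r\int_0^L u\,m \dd x + \int_0^L \big(G^2 + G\,u_x\big)m \dd x = 0 .
\end{equation*}
I then use the algebraic identity $G^2+G\,u_x=\tfrac14\big((b+c\,\theta)^2-u_x^2\big)$ with $\theta(t):=\int_0^L u_x(t,\cdot)\,m(t,\cdot)$, which is immediate from $2G=b+c\,\theta-u_x$.

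Writing $y(t):=\int_0^L u\,m$ and $z(t):=\int_0^L m\,u_x^2$ and recalling $\int_0^L m=1$, the relation becomes $z=4y'-4ry+(b+c\,\theta)^2$. By the first two parts, $y\geq 0$, so $-4ry\leq 0$; by Cauchy--Schwarz $\theta^2\leq z$; and since $c=\epsilon/(2+\epsilon)<1$, a Young inequality on the cross term gives $(b+c\,\theta)^2\leq\tfrac{1+c^2}{2}z+C_1$. Hence $\tfrac{1-c^2}{2}z\leq 4y'+C_1$; integrating on $(0,T)$ and using $y(T)=\int_0^L u_T\,m(T)\leq\|u_T\|_\infty$ (as $m(T)$ is a probability density and $u_T$ is bounded) together with $y(0)=\int_0^L u(0)\,m_0\geq 0$ yields $\int_0^T z\dd t\leq C$, i.e.\ \eqref{estim-2-prop21}.

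The main obstacle is twofold. First, the exact cancellation of all boundary terms in the energy identity: this is precisely where the reflecting pair \eqref{main system}(iv)--(v) is far friendlier than the Dirichlet setting of \cite{graber2016existence}, and it must be checked carefully. Second, closing the last estimate: the naive bound $z\leq 4y'+(b+c\,\theta)^2$ is circular because $(b+c\,\theta)^2$ is a priori controlled only by $z$ itself, and one genuinely needs $c<1$ (true for every $\epsilon>0$, with the constant degenerating as $\epsilon\to\infty$) together with the Young absorption to break the loop. The positivity of $m$ also needs a little care at the boundary, which is why I would route it through the duality argument rather than a direct maximum principle.
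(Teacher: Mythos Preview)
Your argument is correct and follows the same route the paper indicates (via \cite{graber2016existence}): conservation of mass from \eqref{main system}(ii)--(v), positivity of $u$ by the maximum principle, and the energy identity obtained by pairing \eqref{main system}(i) with $m$ and \eqref{main system}(ii) with $u$, exploiting the exact cancellation of boundary terms under \eqref{main system}(iv)--(v). Your closure step---using $\theta^2\leq z$ by Cauchy--Schwarz (valid precisely because $\int m=1$ here) and absorbing $(b+c\theta)^2$ via Young's inequality, which requires only $c<1$---is exactly the mechanism behind the estimate, and your duality argument for $m\geq 0$ is a clean way to handle the Robin-type boundary condition (v).
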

\begin{proof}
Using \eqref{main system}(ii) and \eqref{main system}(v), one easily checks that $m(t)$ is a probability density for all $t\in[0,T]$. Moreover, the arguments used to prove \eqref{estim-1-prop21} 
and \eqref{estim-2-prop21} 
in \cite{graber2016existence} hold also for the system \eqref{main system}.
\end{proof}

\begin{lemma}\label{lemma22}
Let $(u,m)$ be a pair of smooth solution to \eqref{main system}, then
\begin{equation} \label{estim-1-lemma22}
\|u \|_{\infty} + \|u_{x} \|_{\infty} \leq C,
\end{equation}
where the constant $C>0$ does not depend on $\sigma$. In particular we have that 
\begin{equation}\label{estim-1-nonlocal-lemma}
\forall t \in [0,T], \qquad\left\vert \int_{0}^{L} u_{x}(t,x)m(t,x)\dd x   \right\vert \leq C,
\end{equation}
where $C>0$ does not depend on $\sigma$.
\end{lemma}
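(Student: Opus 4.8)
The plan is to establish the $L^\infty$ bounds on $u$ and $u_x$ by exploiting the maximum principle structure of the HJB equation \eqref{main system}(i) together with the Neumann boundary conditions \eqref{main system}(iv), making sure every constant tracked is independent of $\sigma$. First I would get an upper bound on $u$: from Proposition \ref{prop21} we already know $u\geq 0$. For the upper bound, observe that $G(u_x,m)^2 \geq 0$, so the term $+G(u_x,m)^2$ in \eqref{main system}(i) is a nonnegative source; this does not immediately give an upper bound, so instead I would bound $G$ itself. Since $G(u_x,m) = \tfrac12(b + c\int_0^L u_x m\dd y - u_x)$, and using the estimate \eqref{estim-2-prop21} from Proposition \ref{prop21} as an entry point is not enough pointwise; the cleaner route is to run a direct maximum-principle argument on $u$ treating the equation as $u_t + \tfrac{\sigma^2}{2}u_{xx} - ru + \tfrac14(b + c\bar{p}' - u_x)^2 = 0$ where I abbreviate the nonlocal term, and to bound $u$ in terms of $\|u_T\|_\infty$ plus a term controlled by $\sup_t |b + c\int u_x m|$ and $\sup|u_x|$. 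This forces a bootstrap: I need a gradient bound first, or simultaneously.

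The core step, therefore, is the gradient bound $\|u_x\|_\infty \leq C$ with $C$ independent of $\sigma$. The standard device is a Bernstein-type estimate: differentiate \eqref{main system}(i) in $x$, set $w = u_x$, and derive a parabolic equation for $w$ (or for $w^2/2$) of the form $w_t + \tfrac{\sigma^2}{2}w_{xx} - \tfrac{\sigma^2}{2}|w_x|^2\cdot(\text{sign stuff}) - rw + (\partial_{u_x}G^2)w_x = 0$, and then apply the maximum principle on $\overline{Q_T}$. The Neumann condition \eqref{main system}(iv) says $w = u_x = 0$ on the lateral boundary $\{x=0,L\}$, which is exactly what one wants: the maximum of $|w|$ over $\overline{Q_T}$ is then attained either at $t=T$, where $w = u_T'$ is bounded by hypothesis (1)--(2), or in the interior, where the equation controls it. The key point for $\sigma$-independence is that the second-order term $\tfrac{\sigma^2}{2}w_{xx}$ has a favorable sign at an interior extremum and the quadratic gradient term coming from differentiating $G^2$ must be handled so that no $\sigma$ appears in the final constant — this is where the specific quadratic structure $G^2$ (as opposed to a general Hamiltonian) and the boundedness of $\bar p(t)$ are used. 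I would likely first prove an a priori bound on the nonlocal quantity $|\int_0^L u_x m\dd x| \leq \|u_x\|_\infty$ (trivially, since $m$ is a probability density), feeding it into $G$, so that the equation for $w$ is genuinely of the form to which the maximum principle applies with data-only constants.

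Once $\|u_x\|_\infty \leq C$ is in hand, the bound on $\|u\|_\infty$ follows immediately: $G(u_x,m)$ is then bounded by a data constant, so $0 \leq G^2 \leq C$, and the maximum principle applied to \eqref{main system}(i) (comparing with $u_T$ plus a linear-in-time correction absorbing the bounded source and the $-ru$ term) gives $\|u\|_\infty \leq C$ with $C$ independent of $\sigma$. Finally, \eqref{estim-1-nonlocal-lemma} is then immediate: since $m(t,\cdot)$ is a probability density (Proposition \ref{prop21}),
\[
\left|\int_0^L u_x(t,x)m(t,x)\dd x\right| \leq \|u_x(t,\cdot)\|_\infty \int_0^L m(t,x)\dd x = \|u_x(t,\cdot)\|_\infty \leq C.
\]
The main obstacle I anticipate is executing the Bernstein argument cleanly while keeping the constant independent of $\sigma$: one must choose the auxiliary function (e.g. $w^2 + \lambda u$ or $w^2 e^{\mu t}$) and organize the Cauchy–Schwarz/Young steps so that the $\sigma^2|w_x|^2$ terms cancel rather than being crudely discarded, and so that the lateral Neumann boundary genuinely contributes nothing. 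This is presumably exactly the improvement over \cite{graber2016existence} that the authors advertise ("uniform bounds on $u$ and $u_x$ which do not depend on $\sigma$"), so I would expect their proof to carry out precisely this maximum-principle/Bernstein computation with careful bookkeeping.
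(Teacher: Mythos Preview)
Your overall plan is correct and will produce the lemma with $\sigma$-independent constants, but it differs from the paper's proof in one respect and over-anticipates difficulty in another.

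For the gradient bound you and the paper do the same thing: differentiate \eqref{main system}(i) in $x$, use that the Neumann condition forces $u_x=0$ on the lateral boundary, and apply the maximum principle. However, the ``Bernstein obstacle'' you anticipate is a phantom. Differentiating $G(u_x,m)^2$ in $x$ gives $2G\cdot(-\tfrac12 u_{xx})=-G\,w_x$, so $w=u_x$ satisfies the \emph{linear} equation
\[
w_t+\tfrac{\sigma^2}{2}w_{xx}-rw-G\,w_x=0,
\]
with no $|w_x|^2$ term at all. There is nothing to cancel via Cauchy--Schwarz or auxiliary functions of the form $w^2+\lambda u$; the paper simply applies the maximum principle to $u_x e^{-rt}$ (the factor absorbing the zeroth-order term) and reads off $\|u_x\|_\infty\le e^{rT}\|u_T'\|_\infty$, a bound visibly independent of $\sigma$.

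For the $\|u\|_\infty$ bound the paper takes a different route from yours. Rather than first bounding $u_x$ and then $G$, the paper uses directly that $f(t):=b+c\int u_x m\in L^2(0,T)$ (from \eqref{estim-2-prop21}) and performs a Hopf--Cole transformation
\[
w=\exp\!\Big\{\sigma^{-2}\Big(u+\tfrac12\!\int_0^t f(s)^2\,ds\Big)\Big\}-1,
\]
which linearizes the quadratic term and reduces to $-w_t-\tfrac{\sigma^2}{2}w_{xx}\le0$; the maximum principle then yields $\|u\|_\infty\le\|u_T\|_\infty+\tfrac12\|f\|_{L^2}^2$. Your route (gradient bound $\Rightarrow$ $G$ bounded $\Rightarrow$ comparison supersolution for $u$) is equally valid and arguably more elementary, since it avoids the exponential change of variables; the paper's route has the minor advantage that the bound on $u$ is decoupled from the gradient estimate and uses only the integral estimate \eqref{estim-2-prop21}. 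Either way, \eqref{estim-1-nonlocal-lemma} then follows exactly as you say, from $\|u_x\|_\infty$ and $\int m(t)=1$.
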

\begin{proof}
As in \cite[Lemma 2.3, Lemma 2.7]{graber2016existence}, the result is a consequence of using the maximum principle for suitable functions.  We give a proof highlighting the fact that $C$ does not depend on $\sigma$. Set 
$
f(t):=b+c\int_{0}^{L}u_{x}(t,y)m(t,y) \dd y,
$
so that
$$
-u_{t}-\frac{\sigma^{2}}{2} u_{xx}+r u \leq \frac{1}{2}\left( f^{2}(t)+u_{x}^{2} \right).
$$
Owing to Proposition \ref{prop21}, $f \in L^{2}(0,T)$. Moreover, if  
$$w:= \exp\left\{ \sigma^{-2}\left( u+\frac{1}{2}\int_{0}^{t} f(s)^{2}\dd s \right)   \right\}-1,$$
then we have 
$$
-w_{t}-\frac{\sigma^{2}}{2} w_{xx} \leq 0.
$$
In particular $w$ satisfies the maximum principle, and $w\leq \mu$ everywhere, where
$$
\mu = \max_{0\leq x\leq L}   \exp\left\{ \sigma^{-2}\left( u_{T}+\frac{1}{2}\int_{0}^{T} f(s)^{2}\dd s \right)   \right\}-1.
$$
Whence, $0\leq u\leq \sigma^{2}\ln(1+\mu)$, so that
$$
\| u \|_{\infty} \leq \|u_{T}\|_{\infty}+\frac{1}{2}\int_{0}^{T} f(s)^{2}\dd s.
$$
On the other hand, we have that
$$
\max_{\Gamma_{T}} |u_{x}| \leq \| u'_{T} \|_{\infty}, \quad \Gamma_{T}:= ([0,T]\times \{ 0, L\}) \cup (\{ T\}\times [0,L]),
$$
so by  using the maximum principle for the function $w(t,x) =u_{x}(t,x)e^{-rt}$,
we infer that
$$
\| u_{x}\|_{\infty} \leq e^{r T}  \| u'_{T} \|_{\infty}.
$$
\end{proof}
\begin{remark}
Unlike in \cite{graber2016existence}, where more sophisticated estimates are performed, the estimation of the nonlocal term $\int_{0}^{L}u_{x}(t,x)m(t,x)\dd x$ follows easily in this case, owing to \eqref{estim-1-lemma22} and the fact that $m$ is a probability density.
\end{remark}

\begin{proposition}\label{prop25}
There exists a constant $C>0$ depending on $\sigma$ and data such that, if $(u,m)$ is a smooth solution to \eqref{main system}, then for some $0<\alpha<1$,
\begin{equation}\label{estim1-prop25}
\| u\|_{C^{1+\alpha/2,2+\alpha}(\overline{Q_{T}})}+\| m\|_{C^{1+\alpha/2,2+\alpha}(\overline{Q_{T}})} \leq C.
\end{equation}
\end{proposition}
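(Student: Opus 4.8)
The plan is a parabolic bootstrap, exploiting the a priori bounds already established. Throughout, by Lemma~\ref{lemma22} the coupling term $G(u_x,m)$ is bounded in $L^\infty(Q_T)$, and by Proposition~\ref{prop21} the density $m(t)$ is a probability measure, so $\|m(t)\|_{L^1(0,L)}=1$ for all $t$. The first step is to upgrade $m$ to a H\"older continuous function. I would work with the Fokker--Planck equation \eqref{main system}(ii) in its divergence form $m_t-\frac{\sigma^2}{2}m_{xx}=\big(G(u_x,m)\,m\big)_x$, together with the no-flux condition \eqref{main system}(v) and the initial datum $m_0\in C^{2+\gamma}$. Since the drift $G(u_x,m)$ is only bounded, it cannot be differentiated yet; instead one runs the standard energy estimate (the boundary terms cancel exactly because of \eqref{main system}(v), so $m$ lies in $L^\infty(0,T;L^2)\cap L^2(0,T;H^1)$) and then invokes the De Giorgi--Nash--Moser theory (equivalently, the $L^p$ estimates of Ladyzhenskaya--Solonnikov--Ural'tseva) for divergence-structure parabolic equations with measurable coefficients. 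This yields first $m\in L^\infty(Q_T)$ and then $m\in C^{\alpha/2,\alpha}(\overline{Q_T})$ for some $\alpha\in(0,1)$, with constants now depending on $\sigma$.

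Next I would obtain H\"older control of $u_x$. Rewriting \eqref{main system}(i) as $u_t+\frac{\sigma^2}{2}u_{xx}-ru=-G(u_x,m)^2$, the right-hand side is bounded in $L^\infty(Q_T)$ by Lemma~\ref{lemma22}, the lateral condition \eqref{main system}(iv) is homogeneous Neumann, and the terminal datum $u_T$ lies in $C^{2+\gamma}$ with $u_T'(0)=u_T'(L)=0$. Classical $L^p$ parabolic estimates up to the boundary then place $u$ in $W^{2,1}_p(Q_T)$ for every finite $p$, and Sobolev embedding gives, in particular, $u_x\in C^{\alpha/2,\alpha}(\overline{Q_T})$ (after decreasing $\alpha$ if necessary). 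Combined with $m\in C^{\alpha/2,\alpha}$, the function $t\mapsto\int_0^L u_x(t,y)m(t,y)\dd y$ is H\"older continuous in $t$, whence $G(u_x,m)$, and therefore $G(u_x,m)^2$, belongs to $C^{\alpha/2,\alpha}(\overline{Q_T})$.

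At this stage both equations are linear with H\"older data and I would close with two applications of parabolic Schauder estimates. Applied to \eqref{main system}(i) --- viewed as a linear equation for $u$ with source $-G(u_x,m)^2\in C^{\alpha/2,\alpha}$, homogeneous Neumann boundary data, and $C^{2+\gamma}$ terminal datum satisfying the first-order compatibility condition (so one should also shrink $\alpha$ to $\le\gamma$) --- these give $u\in C^{1+\alpha/2,2+\alpha}(\overline{Q_T})$; in particular $u_{xx}\in C^{\alpha/2,\alpha}$. Expanding \eqref{main system}(ii) into non-divergence form, $m_t-\frac{\sigma^2}{2}m_{xx}-G(u_x,m)\,m_x+\tfrac12 u_{xx}\,m=0$, the first-order coefficient $G(u_x,m)$ and the zeroth-order coefficient $\tfrac12 u_{xx}$ now both lie in $C^{\alpha/2,\alpha}$; combined with the Robin condition \eqref{main system}(v) (whose coefficient $G(u_x,m)$ is in the required H\"older class) and $m_0\in C^{2+\gamma}$ with $m_0(0)=m_0'(0)=m_0(L)=m_0'(L)=0$, Schauder estimates give $m\in C^{1+\alpha/2,2+\alpha}(\overline{Q_T})$. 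Adding the two bounds yields \eqref{estim1-prop25}.

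I expect the main obstacle to be the very first step: extracting $L^\infty$, and then H\"older, regularity of $m$ from the Fokker--Planck equation when the only information on the drift is an $L^\infty$ bound and the only a priori control of $m$ itself comes through $\|m\|_{L^1}=1$ and the initial datum. The key is to use the divergence structure of \eqref{main system}(ii) and the no-flux condition \eqref{main system}(v) so as to avoid invoking $u_{xx}$ prematurely; once $m$ is H\"older, every subsequent step is a routine application of linear parabolic theory, with constants degenerating as $\sigma\to 0$ (which is why, unlike in Lemma~\ref{lemma22}, the bound here is permitted to depend on $\sigma$). A secondary technical point is the verification of the compatibility conditions at the four corners of the cylinder $\overline{Q_T}$, which is exactly what Assumptions~(1)--(2) are tailored to supply, so that the Schauder estimates hold up to the parabolic boundary rather than merely in the interior.
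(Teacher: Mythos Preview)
Your bootstrap is correct and is precisely the standard parabolic regularity argument one expects here: De Giorgi--Nash--Moser for the divergence-form Fokker--Planck equation with bounded drift to get $m\in C^{\alpha/2,\alpha}$, then $L^p$ theory plus embedding for the Hamilton--Jacobi equation to get $u_x\in C^{\alpha/2,\alpha}$, and finally two rounds of Schauder estimates to close. The paper itself does not give an independent proof but simply refers to \cite[Proposition~2.8]{graber2016existence}, whose argument proceeds along the same lines; your write-up is thus a faithful reconstruction of what that reference does, adapted to the present Neumann/no-flux boundary conditions.
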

\begin{proof}
See \cite[Proposition 2.8]{graber2016existence}.
\end{proof}

We now prove the main result of this section. 

\begin{theorem} \label{thm:existsunique}
There exists a unique classical solution to \eqref{main system}.
\end{theorem}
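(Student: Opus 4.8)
The plan is to prove existence via the Leray–Schauder fixed point theorem and uniqueness by a separate, direct argument exploiting the monotonicity structure of the coupling. This is the standard MFG strategy, and the a priori estimates needed have already been assembled in Proposition \ref{prop21}, Lemma \ref{lemma22}, and Proposition \ref{prop25}.

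For existence, I would set up a fixed point map as follows. Fix a function $b(t) \in C([0,T])$ playing the role of the nonlocal term $\int_0^L u_x m \dd x$ (or equivalently the market price $\bar p(t)$). With $b(t)$ frozen, equation \eqref{main system}(i) becomes a semilinear HJB equation with Neumann conditions whose Hamiltonian is quadratic in $u_x$; since $u_T \in C^{2+\gamma}$ and satisfies the compatibility condition $u_T'(0) = u_T'(L) = 0$, together with the a priori $L^\infty$ bounds on $u$ and $u_x$ from Lemma \ref{lemma22}, standard parabolic theory (Ladyzhenskaya–Solonnikov–Ural'tseva) gives a unique solution $u \in C^{1+\alpha/2,2+\alpha}(\overline{Q_T})$. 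Feeding $G(u_x, \cdot)$ into the Fokker–Planck equation \eqref{main system}(ii) with the flux boundary condition \eqref{main system}(v), one solves a linear parabolic equation for $m$, which by Proposition \ref{prop21} stays a probability density, and is again in $C^{1+\alpha/2,2+\alpha}$ by Proposition \ref{prop25}. Then define $\Phi(b)(t) := \int_0^L u_x(t,x) m(t,x) \dd x$; this closes the loop. The map $\Phi$ sends $C([0,T])$ into a bounded set of $C^{\alpha/2}([0,T])$ by the regularity estimates, hence is compact; continuity is routine from stability of the linear/semilinear problems. The a priori bound required by Leray–Schauder is exactly \eqref{estim-1-nonlocal-lemma}: any fixed point of $s\Phi$ corresponds (via rescaling) to a solution of a system satisfying the same estimates uniformly in $s \in [0,1]$, so $\|b\|_\infty \le C$. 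This yields a fixed point, hence a classical solution of \eqref{main system}, with the regularity bootstrapped as far as the data allow.

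For uniqueness, I would use the Lasry–Lions-type monotonicity argument. Suppose $(u_1, m_1)$ and $(u_2, m_2)$ are two solutions; set $\bar u = u_1 - u_2$, $\bar m = m_1 - m_2$. Multiply the difference of the HJB equations by $\bar m$, the difference of the Fokker–Planck equations by $\bar u$, integrate over $Q_T$, and add. The boundary terms vanish thanks to \eqref{main system}(iv) and \eqref{main system}(v) and the matching initial/terminal data. The key point is that the quadratic coupling through $G$, after this manipulation, produces a sign-definite term: writing $G_i = G(\partial_x u_i, m_i)$, the cross terms reorganize into something controlled by $\int\!\int (G_1 - G_2)^2 (m_1 + m_2)$ plus a term coming from the nonlocal part $c\int u_x m\,\dd x$. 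In \cite{graber2016existence} this second term was the obstruction, forcing $\e$ small; here, because $m$ is a genuine probability density with total mass exactly $1$ (not merely bounded), the nonlocal term is an averaging operator of operator norm controlled by $1$, and the decomposition $G = \frac12(b + c\langle u_x\rangle_m - u_x)$ lets one absorb it. I would then conclude $G_1 = G_2$ a.e.\ with respect to $m_1 + m_2$, which forces $\bar m \equiv 0$ by uniqueness for the Fokker–Planck equation, and then $\bar u \equiv 0$ by uniqueness (backward) for the HJB equation.

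\textbf{The main obstacle} I anticipate is the uniqueness step, specifically verifying that the full-$\e$ improvement genuinely works: one must check that the sign of the assembled integral identity is correct \emph{including} the nonlocal term, and that no smallness is hidden in the Neumann boundary contributions. The delicate algebra is the identity relating $\int\!\int (\partial_x u_1 - \partial_x u_2)(G_1 m_1 - G_2 m_2)$ to a perfect square modulo the nonlocal piece; getting the constants right so that $m$ having unit mass is exactly what closes the inequality is where the real work lies. The existence direction, by contrast, should be essentially a transcription of \cite[Section 2]{graber2016existence} with Dirichlet replaced by Neumann, so I would keep that part brief and refer to the cited estimates.
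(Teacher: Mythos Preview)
Your existence outline is exactly the paper's approach and is fine as stated. The uniqueness strategy is also the right one, but your diagnosis of \emph{why} the full-$\epsilon$ result goes through is off, and you have not yet located the actual key step.

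The trick is not an absorption argument using that $\int m_i = 1$ to control an operator norm; it is a change of variables that turns the nonlocal cross-term into a perfect square with the \emph{correct} sign outright. Concretely, set $\bar G_i := \int_0^L G_i m_i \,\dd y$ and rewrite
\[
G_i = \tfrac{1}{2}\Bigl(\tfrac{b}{1-c} - \tfrac{2c}{1-c}\,\bar G_i - u_{i,x}\Bigr),
\]
so that $u_{1,x}-u_{2,x} = -2(G_1-G_2) - \tfrac{2c}{1-c}(\bar G_1-\bar G_2)$. Plugging this into the standard Lasry--Lions identity gives, with no remainder to absorb,
\[
0 = \int_0^T e^{-rt}\!\int_0^L (G_1-G_2)^2(m_1+m_2)\,\dd x\,\dd t + \tfrac{2c}{1-c}\int_0^T e^{-rt}(\bar G_1-\bar G_2)^2\,\dd t,
\]
whence $\bar G_1\equiv \bar G_2$, and then $u_1\equiv u_2$, $m_1\equiv m_2$ by uniqueness for the decoupled equations. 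Note in particular that mass conservation is \emph{not} what drives this: the paper runs the identical computation for the Dirichlet model of \cite{graber2016existence} (where $\eta(t)=\int m$ decreases) and obtains the same conclusion, so the improvement over the earlier small-$\epsilon$ result is purely algebraic, not a consequence of the Neumann boundary condition. Your anticipated ``delicate algebra'' is precisely this rewriting in terms of $\bar G_i$ rather than $\int u_{i,x} m_i$.
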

\begin{proof}
The proof of existence is the same as in \cite[Theorem 3.1]{graber2016existence} and relies on Leray-Schauder fixed point theorem. Let $(u_{1},m_{1})$ and $(u_{2},m_{2})$ be two solutions of \eqref{main system}, and set $u=u_{1}-u_{2}$ and $m=m_{1}-m_{2}$. 
Define
$$
G_{i} := \frac{1}{2}\left( b+c\int_{0}^{L}u_{i,x}(t,y)m_{i}(t,y)\dd y-u_{i,x} \right).
$$
Note that $G_i$ can be written
$$
G_i = \frac{1}{2}\left( \frac{b}{1-c}-\frac{2c}{1-c}\bar G_i-u_{i,x} \right),\quad \mbox{ where } \quad \bar G_i := \int_{0}^{L}G_i(t,y)m_{i}(t,y)\dd y.
$$
Integration by parts yields
\begin{equation} \label{eq:uniqueness1}
\left[e^{-rt}\int_0^L u(t,x)m(t,x) \dd x\right]_0^T
= \int_0^T e^{-rt} \int_0^L (G^2_2 - G^2_1 - G_1 u_{x})m_1 + (G^2_1 - G^2_2 + G_2 u_{x})m_2 \dd x \dd t.
\end{equation}
The left-hand side of \eqref{eq:uniqueness1} is zero.
As for the right-hand side, we check that
\begin{equation*}
G^2_2 - G^2_1 - G_1 u_{x} = (G_2 - G_1)^2 + \frac{2c}{1-c}G_1(\bar G_1 - \bar G_2)
\end{equation*}
and, similarly,
\begin{equation*}
G^2_1 - G^2_2 + G_2 u_{x} = (G_2 - G_1)^2 -\frac{2c}{1-c}G_2(\bar G_1 - \bar G_2).
\end{equation*}
Then \eqref{eq:uniqueness1} becomes
\begin{equation} \label{eq:uniqueness2}
0
= \int_0^T e^{-rt} \int_0^L (G_1-G_2)^2(m_1+m_2) \dd x \dd t
+ \frac{2c}{1-c}\int_0^T e^{-rt} (\bar G_1 - \bar G_2)^2 \dd t.
\end{equation}
It follows that $\bar G_1 \equiv \bar G_2$.
Then by uniqueness for parabolic equations with quadratic Hamiltonians, it follows that $u_1 \equiv u_2$.
From uniqueness for the Fokker-Planck equation it follows that $m_1 \equiv m_2$.
\end{proof}

\subsection{Uniqueness revisited for the model of Chan and Sircar}

The authors of \cite{chan2015bertrand} originally introduced the following model:
\begin{equation}
\label{old system}
\left\{
\begin{array}{rcc}
(i) & u_t + \frac{1}{2}\sigma^2 u_{xx} - ru +  G^2(t,u_x,[mu_x]) = 0, & 0 < t < T, \ 0 < x < L\\
(ii) & m_t - \frac{1}{2}\sigma^2 m_{xx} - \left(G(t,u_x,[mu_x])m\right)_x = 0, & 0 < t < T, \ 0 < x < L\\
(iii) & m(0,x) = m_0(x), u(T,x) = u_T(x), & 0 \leq x \leq L\\
(iv) & u(t,0) = m(t,0) = 0, ~~ u_x(t,L) = 0, & 0 \leq t \leq T \\ 
(v) & \frac{1}{2}\sigma^2 m_x(t,L) + G(t,u_x(t,L),[mu_x])m(t,L) = 0, & 0 \leq t \leq T
\end{array}\right.
\end{equation}
where
\begin{align}
\label{H and G} 
G(t,u_x,[mu_x]) &= \frac{1}{2}\left(\frac{2}{2+\epsilon \eta(t)} + \frac{\epsilon}{2+\epsilon \eta(t)}\int_0^L u_\xi(t,\xi) m(t,\xi) d\xi - u_x \right),
\\
\notag 
\eta(t) &:= \int_0^L m(t,\xi)d\xi
\end{align}
The main difference between \eqref{main system} and \eqref{old system} is that in \eqref{old system} there are Dirichlet boundary conditions on the left-hand side $x=0$, which also means that $m$ is no longer a density, but might have decreasing mass.
In \cite{graber2016existence}, existence and uniqueness of classical solutions for \eqref{old system} is obtained.
However, uniqueness was only proved for small parameters $\epsilon$.
Here we improve this result by using the idea of the proof of Theorem \ref{thm:existsunique}.
(The proof is in fact much simpler than in \cite{graber2016existence}.)

\begin{theorem} \label{thm:olduniqueness}
	There exists a unique classical solution of the system \eqref{old system}.
\end{theorem}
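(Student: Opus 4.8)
The plan is to follow the strategy used for Theorem~\ref{thm:existsunique}; only uniqueness needs a genuinely new argument, since existence of a classical solution of \eqref{old system} for every $\e>0$ is already established in \cite{graber2016existence}. So let $(u_1,m_1)$ and $(u_2,m_2)$ be two classical solutions of \eqref{old system}, put $u:=u_1-u_2$, $m:=m_1-m_2$, write $G_i:=G(t,u_{i,x},[m_iu_{i,x}])$ as in \eqref{H and G}, and set $\bar G_i:=\int_0^L G_i(t,y)m_i(t,y)\dd y$ and $\eta_i(t):=\int_0^L m_i(t,\xi)\dd\xi$.

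The first step is an algebraic normalization that neutralizes the fact that $\eta_i$ now depends on time and on the solution (so the ``coefficients'' appearing in $G_i$ are no longer the constants $b,c$ of \eqref{main system}). Integrating \eqref{H and G} against $m_i$, solving the resulting relation for $\int_0^L u_{i,x}m_i\dd y$ in terms of $\bar G_i$ and $\eta_i$, and substituting back into \eqref{H and G}, one finds the clean identity
$$G_i=\tfrac12\bigl(1-\e\bar G_i-u_{i,x}\bigr),$$
which holds irrespective of the value of $\eta_i$. In particular $u_x=-2(G_1-G_2)-\e(\bar G_1-\bar G_2)$.

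The second step reproduces the integration-by-parts identity of \eqref{eq:uniqueness1}. Writing \eqref{old system}(ii) in divergence form $m_{i,t}=\partial_x\bigl(\tfrac12\sigma^2 m_{i,x}+G_im_i\bigr)$ and differentiating $e^{-rt}\int_0^L um\dd x$ using \eqref{old system}(i)--(ii), one integrates by parts twice in $x$. Here I must check that all boundary terms vanish under the mixed boundary conditions: at $x=0$ one has $u(t,0)=0$ and $m(t,0)=0$, while at $x=L$ one has $u_x(t,L)=0$ together with the no-flux condition \eqref{old system}(v), i.e.\ $\tfrac12\sigma^2 m_{i,x}(t,L)+G_im_i(t,L)=0$; hence both $\bigl[u(\tfrac12\sigma^2 m_x+G_1m_1-G_2m_2)\bigr]_0^L$ and $[u_xm]_0^L$ are zero. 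Since $u(T,\cdot)\equiv0$ and $m(0,\cdot)\equiv0$, the bracket $\bigl[e^{-rt}\int_0^L um\dd x\bigr]_0^T$ vanishes as well, so one arrives at the exact analogue of \eqref{eq:uniqueness1}. Substituting the identity of Step~1 to write $G_2^2-G_1^2-G_1u_x=(G_1-G_2)^2+\e G_1(\bar G_1-\bar G_2)$ and $G_1^2-G_2^2+G_2u_x=(G_1-G_2)^2-\e G_2(\bar G_1-\bar G_2)$, and using $\int_0^L G_im_i=\bar G_i$, this becomes the analogue of \eqref{eq:uniqueness2}:
$$0=\int_0^T e^{-rt}\int_0^L (G_1-G_2)^2(m_1+m_2)\dd x\dd t+\e\int_0^T e^{-rt}(\bar G_1-\bar G_2)^2\dd t.$$

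The final step is the conclusion. As $\e>0$ and $m_1,m_2\ge0$, both terms must vanish; in particular $\bar G_1\equiv\bar G_2$. Once the coupling function $\bar G_i(t)$ is known to be common to both solutions, Step~1 shows that $u_1$ and $u_2$ solve the same scalar Hamilton--Jacobi equation with quadratic gradient nonlinearity and a source depending only on $t$, under identical terminal and boundary data; uniqueness for such parabolic equations (invoked already in the proof of Theorem~\ref{thm:existsunique}) gives $u_1\equiv u_2$, and then $m_1\equiv m_2$ by uniqueness for the linear Fokker--Planck equation. The main obstacle is Step~1 --- verifying that the time- and solution-dependent normalization $\eta_i(t)$ can be absorbed so that the quadratic structure leading to \eqref{eq:uniqueness2} is preserved --- together with the bookkeeping in Step~2 needed to confirm that the mixed Dirichlet/no-flux boundary conditions kill every boundary term.
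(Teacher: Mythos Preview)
Your proposal is correct and follows essentially the same approach as the paper: cite \cite{graber2016existence} for existence, establish the key algebraic identity $G_i=\tfrac12(1-\e\bar G_i-u_{i,x})$ that absorbs the time-dependent $\eta_i(t)$, derive the energy identity \eqref{eq:uniqueness3} via integration by parts, and conclude as in Theorem~\ref{thm:existsunique}. Your Step~2 supplies the boundary-term bookkeeping that the paper suppresses by simply invoking ``integrating by parts as in the proof of Theorem~\ref{thm:existsunique},'' but the overall argument is the same.
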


\begin{proof}
	Existence was given in \cite{graber2016existence}.
	For uniqueness, let $(u_1,m_1),(u_2,m_2)$ be two solutions, and define $u = u_1-u_2,m= m_1-m_2$, and
	\begin{align*}
	G_i &= \frac{1}{2}\left(\frac{2}{2+\epsilon \eta_i(t)} + \frac{\epsilon}{2+\epsilon \eta_i(t)}\int_0^L u_{i,\xi}(t,\xi) m_i(t,\xi) d\xi - u_{i,x} \right),
	\\
	\notag 
	\eta_i(t) &:= \int_0^L m_i(t,\xi)d\xi.
	\end{align*}
	Note that $G_i$ can also be written
	$$
	G_i = \frac{1}{2}(1 - \epsilon \bar G_i - u_{i,x}), \quad \mbox{ where } \quad \bar G_i := \int_{0}^{L}G_i(t,y)m_{i}(t,y)\dd y.
	$$
	Then integrating by parts as in the proof of Theorem \ref{thm:existsunique}, we obtain
	\begin{equation} \label{eq:uniqueness3}
	0
	= \int_0^T e^{-rt} \int_0^L (G_1-G_2)^2(m_1+m_2) \dd x \dd t
	+ \epsilon\int_0^T e^{-rt} (\bar G_1 - \bar G_2)^2 \dd t.
	\end{equation}
	We conclude as before.
\end{proof}

\section{Optimal control of Fokker-Planck equation} \label{sec:optlctrl}

The purpose of this section is to prove that \eqref{main system} is a system of optimality for a convex minimization problem.
It was first noticed in the seminal paper by Lasry and Lions \cite{lasry07} that systems of the form \eqref{eq:mfg classical} have a formal interpretation in terms of optimal control.
Since then this property has been made rigorous and exploited to obtain well-posedness in first-order \cite{cardaliaguet2015weak,cardaliaguet2014mean,cardaliaguet2015first} and degenerate cases \cite{cardaliaguet2015second}; see \cite{benamou2016variational} for a nice discussion.
However, all of these references consider the case of congestion penalization, which results in an a priori summability estimate on the density.
There is no such penalization in \eqref{main system}.
Hence, the optimality arguments used in \cite{cardaliaguet2015weak}, for example, appear insufficient in the present case to prove existence and uniqueness of solutions to the first order system.
Furthermore, it is very difficult in the present context to formulate the dual problem, which in the aforementioned works was an essential ingredient in proving existence of an adjoint state.
Nevertheless, aside from its intrinsic interest, we will see in Section \ref{sec:first-order} that optimality gives us at least enough to pass to the limit as $\sigma \to 0$.

We make the substitution $\bar{b} = \dfrac{b}{1-c}, \bar{c} = \dfrac{c}{1-c}$ (so according to \eqref{parameters} we get $\bar{b}=1$ and $\bar{c} = \epsilon/2$).
Consider the optimization problem of minimizing the objective functional
\begin{multline} \label{control of FP}
J(m,q) = \int_0^T \int_0^L e^{-rt} \left(q^2(t,x)  - \bar{b}q(t,x)\right)m(t,x)\dd x \dd t
\\
+ \bar{c}\int_0^T e^{-rt}\left(\int_0^L  q(t,y)m(t,y)\dd y\right)^2\dd t - \int_0^L e^{-rT} u_T(x)m(T,x)\dd x
\end{multline}
for $(m,q)$ in the class $\s{K}$, defined as follows.
Let  $m \in L^1([0,T] \times [0,L])$ be non-negative, let $q \in L^2([0,T] \times [0,L])$, and assume that $m$ is a weak solution to the Fokker-Planck equation
\begin{equation}
\label{fokker planck condition}
m_t -  \frac{\sigma^2}{2}m_{xx} - (qm)_x = 0, \ \ m(0) = m_0,
\end{equation}
equipped with Neumann boundary conditions, where weak solutions are defined as in \cite{porretta2015weak}:
\begin{itemize}
	\item the integrability condition $mq^2 \in L^1([0,T] \times [0,L])$ holds, and
	\item \eqref{fokker planck condition} holds in the sense of distributions--namely, for all $\phi \in C^{\infty}_c([0,T) \times [0,L])$ such that $\phi_x(t,0) = \phi_x(t,L) = 0$ for each $t \in (0,T)$, we have
	$$
	\int_0^T \int_0^L (-\phi_t - \frac{\sigma^2}{2}\phi_{xx} + q\phi_x)m \ \dd x \dd t = \int_0^L \phi(0)m_0 \dd x.
	$$
\end{itemize}
Then we say that $(m,q) \in \s{K}$.
We refer the reader to \cite{porretta2015weak} for properties of weak solutions of \eqref{fokker planck condition}, namely that they are unique and that they coincide with renormalized solutions and for this reason have several useful properties.
One property which will be of particular interest to us is the following lemma:
\begin{lemma}[Proposition 3.10 in \cite{porretta2015weak}] \label{lem:positivity}
	Let $(m,q) \in \s{K}$, i.e.~let $m$ be a weak solution of the Fokker-Planck equation \eqref{fokker planck 
	condition}.
	Then $\|m(t)\|_{L^1([0,L])} = \|m_0\|_{L^1([0,L])}$ for all $t \in [0,T]$.
	Moreover, if $\log m_0 \in L^1([0,L])$, then for any 
	\begin{equation}
	\|\log m(t)\|_{L^1([0,L])} \leq  C(\|\log m_0\|_{L^1([0,L])} + 1) \ \ \forall t \in [0,T],
	\end{equation}
	where $C$ depends on $\|q\|_{L^2}$ and $\|m_0\|_{L^1}$.
	In particular, if $\log m_0 \in L^1([0,L])$ and $(m,q)$ in $\s{K}$, then $m > 0$ a.e.
\end{lemma}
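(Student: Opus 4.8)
The plan is to build on the equivalence, established in \cite{porretta2015weak}, between weak solutions of \eqref{fokker planck condition} (in the sense defined above) and \emph{renormalized} solutions; the latter are automatically in $C([0,T];L^1([0,L]))$ and may be paired with nonlinear functions $\beta(m)$ for $\beta$ having bounded first and second derivatives, which turns the formal computations below into rigorous ones. Granting this, I would argue in three steps. First, for conservation of mass: in the weak formulation take test functions of the form $\phi(t,x)=\zeta(t)$ with $\zeta\in C^\infty_c([0,T))$; since $\phi_x\equiv\phi_{xx}\equiv 0$, every spatial and flux term disappears, leaving $-\int_0^T\zeta'(t)\big(\int_0^L m(t,x)\dd x\big)\dd t=\int_0^L m_0\dd x$. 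Choosing $\zeta$ converging to the indicator of $[0,t_0]$ and invoking the continuity of $t\mapsto\|m(t)\|_{L^1}$ for renormalized solutions yields $\|m(t)\|_{L^1([0,L])}=\|m_0\|_{L^1([0,L])}$ for all $t\in[0,T]$.

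Next comes the entropy estimate. Fix $\delta>0$ and apply the renormalized formulation with $\beta_\delta(s)=-\log(s+\delta)$, which on $[0,\infty)$ is convex and decreasing with $|\beta_\delta'|\le\delta^{-1}$ and $0<\beta_\delta''\le\delta^{-2}$, so that $\beta_\delta(m)$ is an admissible test function. Integrating by parts in $x$, the boundary contributions occur in the single combination $\beta_\delta'(m)\big(\tfrac{\sigma^2}{2}m_x+qm\big)$, which vanishes by the Neumann condition attached to \eqref{fokker planck condition}; the diffusion produces the good term $-\tfrac{\sigma^2}{2}\int\beta_\delta''(m)m_x^2\le 0$; and the drift term $-\int\beta_\delta''(m)\,q\,m\,m_x$ is controlled by Young's inequality, using $\beta_\delta''(m)m^2\le 1$, by $\tfrac{\sigma^2}{4}\int\beta_\delta''(m)m_x^2+\sigma^{-2}\int q^2$. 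This gives $\frac{d}{dt}\int_0^L\beta_\delta(m(t))\dd x\le\sigma^{-2}\int_0^L q^2(t,x)\dd x$, and integrating in $t$ then letting $\delta\to 0$ by monotone convergence produces $\int_0^L(-\log m(t))\dd x\le\int_0^L(-\log m_0)\dd x+\sigma^{-2}\|q\|_{L^2(Q_T)}^2$.

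Finally, I would convert this into the stated bounds. Writing $\|\log m(t)\|_{L^1}=\int_{\{m(t)\le 1\}}(-\log m(t))+\int_{\{m(t)>1\}}\log m(t)$ and using $\log s\le s$ together with the first step, the second term is at most $\|m_0\|_{L^1}$; the first term equals $\int_0^L(-\log m(t))+\int_{\{m(t)>1\}}\log m(t)$, hence is at most $\int_0^L(-\log m(t))+\|m_0\|_{L^1}$. Combining with the entropy estimate and $\int_0^L(-\log m_0)\le\|\log m_0\|_{L^1}$ gives the claimed inequality with $C$ depending on $\sigma$, $\|q\|_{L^2}$ and $\|m_0\|_{L^1}$. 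Since $\|\log m(t)\|_{L^1}<\infty$ for every $t$, we get $-\log m(t,x)<\infty$ for a.e.\ $x$, i.e.\ $m>0$ a.e.

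The delicate point is the second step: justifying the chain rule for the singular renormalization $-\log$ at the level of weak solutions. This is precisely where one must use that weak solutions are renormalized---so that $\beta_\delta(m)$ is a legitimate test function, that $t\mapsto\int_0^L\beta_\delta(m(t))\dd x$ is absolutely continuous with the differential inequality holding for a.e.\ $t$, and that the parabolic dissipation term carries the stated sign---and where only $q\in L^2$ is available to handle the boundary terms, the saving feature being that the Neumann flux enters exactly in the combination that the boundary condition annihilates. Everything else is elementary; the argument follows \cite[Proposition 3.10]{porretta2015weak}.
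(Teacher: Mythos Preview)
The paper does not supply its own proof of this lemma: it is stated with the attribution ``Proposition 3.10 in \cite{porretta2015weak}'' and the reader is referred there for the argument. Your sketch is a faithful and correct rendering of that reference---renormalized formulation with $\beta_\delta(s)=-\log(s+\delta)$, Young's inequality using $\beta_\delta''(m)m^2\le 1$, then passage to the limit and the elementary splitting of $\|\log m\|_{L^1}$---so there is nothing to compare. One minor remark: the dependence of $C$ on $\sigma$ that you flag is consistent with the paper's usage, since in Section~\ref{sec:optlctrl} the diffusion $\sigma>0$ is fixed data.
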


\begin{proposition} \label{prop:minimizers}
	Let $(u,m)$ be a solution of \eqref{main system}.
	Set 
	$$q = \frac{1}{2}\left(b + c\int_0^L u_x(t,y) m(t,y) \dd y - u_x\right).$$
	Then $(m,q)$ is a minimizer for problem \eqref{control of FP}, that is, $J(m,q) \leq J(\tilde m,\tilde q)$ for all $
	(\tilde m,\tilde q)$ satisfying \eqref{fokker planck condition}.
	Moreover, if $\log m_0 \in L^1([0,L])$ then the maximizer is unique.
\end{proposition}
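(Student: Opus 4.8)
The plan is to prove the following quantitative identity, valid for \emph{every} $(\tilde m,\tilde q)\in\s K$:
\begin{equation}
J(\tilde m,\tilde q)-J(m,q)=\int_0^T\!\!\int_0^L e^{-rt}(\tilde q-q)^2\,\tilde m\,\dd x\,\dd t+\bar c\int_0^T e^{-rt}\bigl(\tilde\theta(t)-\theta(t)\bigr)^2\,\dd t,
\end{equation}
where $\theta(t):=\int_0^L q(t,y)m(t,y)\,\dd y$ and $\tilde\theta(t):=\int_0^L\tilde q(t,y)\tilde m(t,y)\,\dd y$. Since $\bar c>0$ and $\tilde m\ge 0$, the right-hand side is nonnegative, which gives minimality immediately. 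It also gives uniqueness when $\log m_0\in L^1$: equality forces $\tilde q=q$ a.e.\ on $\{\tilde m>0\}$, and by Lemma~\ref{lem:positivity} we have $\tilde m>0$ a.e., so $\tilde q=q$ a.e.\ on $Q_T$; then $\tilde m$ and $m$ are both weak solutions of the Fokker--Planck equation with the same bounded drift $q$ and initial datum $m_0$, hence coincide by uniqueness of weak solutions (see \cite{porretta2015weak}).

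For the setup, I first rewrite the equilibrium control: multiplying $q=\tfrac12(b+c\int_0^L u_xm-u_x)$ by $m$ and integrating in $x$ (using that $m(t)$ is a probability density, Proposition~\ref{prop21}) gives $\theta=\tfrac12\bigl(b-(1-c)\int_0^Lu_xm\bigr)$, hence $q=\tfrac12(\bar b-2\bar c\,\theta-u_x)$, i.e.\ $2q-\bar b=-2\bar c\,\theta-u_x$; moreover \eqref{main system}(i) reads $u_t+\tfrac{\sigma^2}{2}u_{xx}-ru+q^2=0$. By Theorem~\ref{thm:existsunique} and Proposition~\ref{prop25}, $u$ is a classical solution, smooth up to the boundary, and by Lemma~\ref{lemma22} the control $q=G(u_x,m)$ is bounded.

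The core computation is the duality identity
\begin{equation}
\int_0^T\!\!\int_0^L e^{-rt}\bigl(q^2+\tilde q\,u_x\bigr)\tilde m\,\dd x\,\dd t=\int_0^L u(0,x)m_0(x)\,\dd x-\int_0^L e^{-rT}u_T(x)\tilde m(T,x)\,\dd x,\tag{$\star$}
\end{equation}
obtained by using $\phi=e^{-rt}u$ as a test function in the weak Fokker--Planck equation satisfied by $\tilde m$: this $\phi$ is admissible on the lateral boundary since $u_x(\cdot,0)=u_x(\cdot,L)=0$, and \eqref{main system}(i) yields $-\phi_t-\tfrac{\sigma^2}{2}\phi_{xx}+\tilde q\phi_x=e^{-rt}(q^2+\tilde q u_x)$. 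Taking $(\tilde m,\tilde q)=(m,q)$ in $(\star)$, together with $q u_x=\bar b q-2\bar c\,\theta q-2q^2$ and $\int_0^L\theta q m\,\dd x=\theta^2$, identifies $J(m,q)=-\int_0^L u(0)m_0\,\dd x-\bar c\int_0^T e^{-rt}\theta^2\,\dd t$. For general $(\tilde m,\tilde q)$, I write $\tilde q^2-\bar b\tilde q=(\tilde q-q)^2-2\bar c\,\theta\tilde q-u_x\tilde q-q^2$, multiply by $e^{-rt}\tilde m\ge 0$, integrate over $Q_T$, use $(\star)$ to eliminate $\int e^{-rt}u_x\tilde q\tilde m$ (at which point the two $\int e^{-rt}q^2\tilde m$ terms cancel), then add $\bar c\int e^{-rt}\tilde\theta^2$ and $-\int e^{-rT}u_T\tilde m(T)$ and complete the square in $\tilde\theta$; comparing with the formula for $J(m,q)$ gives the displayed identity.

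The only genuinely delicate point, and the main obstacle, is the rigorous justification of $(\star)$: the test function $\phi=e^{-rt}u$ does not vanish at $t=T$, whereas $\tilde m$ is only a weak/renormalized solution. I would handle this by inserting a time cutoff $\chi_\varepsilon$ equal to $1$ on $[0,T-\varepsilon]$ and vanishing near $t=T$, applying the weak formulation to $\chi_\varepsilon\phi$, and letting $\varepsilon\to 0$; the extra term $\int e^{-rt}u\,\chi_\varepsilon'\,\tilde m$ converges to $-\int_0^L e^{-rT}u_T\tilde m(T)$ because weak solutions in $\s K$ are renormalized in the sense of \cite{porretta2015weak} and hence admit a weakly time-continuous trace at $T$. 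All the integrals involved are finite thanks to the boundedness of $u,u_x,u_{xx},q$, the integrability $\tilde m\tilde q^2\in L^1$ built into the definition of $\s K$, and the mass conservation $\|\tilde m(t)\|_{L^1}=\|m_0\|_{L^1}$ of Lemma~\ref{lem:positivity}.
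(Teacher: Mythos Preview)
Your proof is correct and follows essentially the same strategy as the paper: both derive the identity
\[
J(\tilde m,\tilde q)-J(m,q)=\int_0^T\!\!\int_0^L e^{-rt}(\tilde q-q)^2\tilde m\,\dd x\,\dd t+\bar c\int_0^T e^{-rt}\Bigl(\int_0^L\tilde q\tilde m-\int_0^L qm\Bigr)^2\dd t
\]
by testing the weak Fokker--Planck equation against $e^{-rt}u$ and exploiting the HJB equation together with the first-order relation $\bar b-2q-2\bar c\theta-u_x=0$, and then conclude uniqueness via Lemma~\ref{lem:positivity}. The organization differs slightly: the paper starts from the algebraic identity $\tilde q^2\tilde m-q^2m=2q(\tilde q\tilde m-qm)-q^2(\tilde m-m)+\tilde m(\tilde q-q)^2$ and works with the difference $\tilde m-m$ directly, deriving the first-order condition at the end; you instead establish that condition first, obtain the closed-form $J(m,q)=-\int_0^L u(0)m_0-\bar c\int_0^T e^{-rt}\theta^2$ via your duality identity~$(\star)$, and then expand $J(\tilde m,\tilde q)$. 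You are also more explicit than the paper about the time-cutoff needed to justify $e^{-rt}u$ as a test function up to $t=T$, which is a welcome clarification.
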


\begin{proof}
	It is useful to keep in mind that the proof is based on the convexity of $J$ following a change of variables.
	By abuse of notation we might write
	\begin{multline*}
	J(m,w) = \int_0^T \int_0^L e^{-rt} \left(\frac{w^2(t,x)}{m(t,x)}  - \bar{b}w(t,x)\right)\dd x \dd t
	\\
	+ \bar{c}\int_0^T e^{-rt}\left(\int_0^L  w(t,y)\dd y\right)^2\dd t - \int_0^L e^{-rT} u_T(x)m(T,x)\dd x,
	\end{multline*}
	cf.~the change of variables used in \cite{benamou2000computational} and several works which cite that paper.
	However, in this context we prefer a direct proof.
	
	Using the algebraic identity
	$$
	\tilde q^2\tilde m - q^2 m = 2q(\tilde q \tilde m - qm) - q^2(\tilde m - m) + \tilde m(\tilde q - q)^2,
	$$
	we have
	\begin{multline} \label{eq:optimality1}
	J(\tilde m,\tilde q) - J(m,q) = \bar{c}\int_0^T e^{-rt}\left( \int_0^L\tilde q \tilde m - qm \ \dd y\right)^2 \dd t
	- \int_0^L e^{-rT} u_T(x)(\tilde m -m)(T,x)\dd x
	\\
	+ 2\bar{c}\int_0^T e^{-rt}\left( \int_0^L\tilde q \tilde m - qm \ \dd y\right)\left( \int_0^Lqm \ \dd y\right)\dd t
	\\ +\int_0^T \int_0^L e^{-rt} \left(\bar{b}(qm - \tilde q \tilde m)  + 2q(\tilde q \tilde m - qm) - q^2(\tilde m - m) 
	+ \tilde m(\tilde q - q)^2\right)\dd x \dd t.
	\end{multline}
	Now using the fact that $u$ is a smooth solution of
	\begin{equation}
	\label{eq:adjoint}
	u_t + \frac{\sigma^2}{2}u_{xx} - ru + q^2 = 0, \ u(T) = 0, \ u_x|_{0,L} = 0
	\end{equation}
	and since
	$$
	(\tilde m - m)_t - \frac{\sigma^2}{2}(\tilde m - m)_{xx} - (\tilde q \tilde m - qm)_x = 0, \ \ (\tilde m - m)(0) = 0
	$$
	in the sense of distributions, it follows that
	\begin{multline*}
	\int_0^T \int_0^L e^{-rt} q^2(\tilde m - m)\dd x \dd t
	+ \int_0^L e^{-rT} u_T(x)(\tilde m -m)(T,x)\dd x
	\\
	= -\int_0^T \int_0^L e^{-rt}(\tilde q \tilde m - qm)u_x \ \dd x \dd t.
	\end{multline*}
	Putting this into \eqref{eq:optimality1} and rearranging, we have
	\begin{multline} \label{eq:optimality2}
	J(\tilde m,\tilde q) - J(m,q) = \int_0^T \int_0^L e^{-rt} (qm - \tilde q \tilde m) \left(\bar{b}  - 2q - 2\bar{c}\int_0^L  
	qm \ \dd y - u_x\right)\dd x \dd t
	\\
	+ \int_0^T \int_0^L e^{-rt} \tilde m(\tilde q - q)^2 \dd x \dd t
	+ \bar{c}\int_0^T e^{-rt}\left( \int_0^L\tilde q \tilde m - qm \ \dd x\right)^2 \dd t.
	\end{multline}
	To conclude that $J(\tilde m,\tilde q) \geq J(m,q)$, it suffices to prove that
	\begin{equation}
	\label{eq:first-order}
	\bar{b}  - 2q - 2\bar{c}\int_0^Lqm \ \dd y - u_x = 0.
	\end{equation}
	Recall the definition
	$$q = \frac{1}{2}\left(b + c\int_0^L u_x(t,y) m(t,y) \dd y - u_x\right).$$
	Integrate both sides against $m$ and rearrange, using the definition of the constants $\bar b,\bar c$ to get
	$$
	\int u_xm ~\dd y = \bar b - 2(\bar c + 1)\int qm ~\dd y.
	$$
	Plugging this into the definition of $q$ proves \eqref{eq:first-order}.
	Thus $(m,q)$ is a minimizer.
	
	On the other hand, suppose $\log m_0 \in L^1([0,L])$ and that $(\tilde m,\tilde q)$ is another minimizer.
	Then \eqref{eq:optimality2} implies that 
	\begin{equation} \label{eq:optimality3}
	\int_0^T \int_0^L e^{-rt} \tilde m(\tilde q - q)^2 \dd x \dd t
	+ \bar{c}\int_0^T e^{-rt}\left( \int_0^L\tilde q \tilde m - qm \ \dd x\right)^2 \dd t = 0.
	\end{equation}
	Now by Lemma \ref{lem:positivity}, we have $\tilde m > 0$ a.e.
	Therefore \eqref{eq:optimality3} implies $\tilde q = q$.
	By uniqueness for the Fokker-Planck equation, we conclude that $\tilde m = m$ as well.
	The proof is complete.
\end{proof}

\begin{remark}
	A similar argument shows that System \eqref{old system}, with Dirichlet boundary conditions on the left-hand side, is also a system of optimality for the same minimization problem, except this time with Dirichlet boundary conditions (on the left-hand side) imposed on the Fokker-Planck equation.
	We omit the details.
\end{remark}

\section{First-order case} \label{sec:first-order}

In this section we use a vanishing viscosity method to prove that \eqref{main system} has a solution even when we plug in $\sigma = 0$.
We need to collect some estimates which are uniform in $\sigma$ as $\sigma \to 0$.
From now on we will assume $0 < \sigma \leq 1$, and whenever a constant $C$ appears it does not depend on $\sigma$.

\begin{lemma} \label{lem:ut L2}
	$\|u_t\|_2 \leq C$.
\end{lemma}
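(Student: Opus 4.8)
The plan is to test the Hamilton-Jacobi equation \eqref{main system}(i) against $u_t$ itself and integrate over $Q_T$, exploiting the parabolic structure to absorb the $u_{xx}$ term. Writing the equation as $u_t = -\frac{\sigma^2}{2}u_{xx} + ru - G(u_x,m)^2$, I would multiply by $u_t e^{-rt}$ (or simply $u_t$) and integrate in space and time. The key observation is that
$$
\int_0^T\int_0^L \left(-\frac{\sigma^2}{2}u_{xx}\right)u_t \dd x \dd t = \frac{\sigma^2}{2}\int_0^T\int_0^L u_x u_{xt}\dd x \dd t - \frac{\sigma^2}{2}\int_0^T\left[u_x u_t\right]_{x=0}^{x=L}\dd t,
$$
where the boundary term vanishes because of the Neumann condition \eqref{main system}(iv) (differentiating $u_x(t,0)=u_x(t,L)=0$ in $t$ is not needed; rather $u_x$ itself is zero there). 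The remaining term is $\frac{\sigma^2}{4}\frac{d}{dt}\int_0^L u_x^2\dd x$ integrated in time, which telescopes to $\frac{\sigma^2}{4}\big(\int_0^L u_x(T)^2 - \int_0^L u_x(0)^2\big)$; by Lemma \ref{lemma22}, $\|u_x\|_\infty \leq C$ uniformly in $\sigma$, and since $0<\sigma\leq 1$ this contributes a bounded quantity. The term $\int\int ru\,u_t$ is handled similarly, or just bounded using Cauchy-Schwarz against $\|u\|_\infty \leq C$ (Lemma \ref{lemma22}) after noting $\int_0^T\int_0^L u\,u_t = \frac12\int_0^L(u(T)^2 - u(0)^2)$, again bounded uniformly.

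The one genuinely nonlinear term is $\int_0^T\int_0^L G(u_x,m)^2 u_t\dd x\dd t$. For this I would use Cauchy-Schwarz: it is at most $\|G^2\|_{L^2(Q_T)}\|u_t\|_{L^2(Q_T)}$, and then absorb $\|u_t\|_2$ into the left-hand side via Young's inequality ($ab \leq \frac14 a^2 + b^2$ or similar). So I need $\|G(u_x,m)\|_{L^4(Q_T)} \leq C$ uniformly in $\sigma$. But $G(u_x,m) = \frac12(b + c\int_0^L u_x m\,\dd y - u_x)$, and by Lemma \ref{lemma22} both $\|u_x\|_\infty$ and the nonlocal term $|\int_0^L u_x m\,\dd y|$ are bounded by $C$ independent of $\sigma$; hence $\|G\|_\infty \leq C$ uniformly, and in particular $\|G^2\|_{L^2(Q_T)} \leq C|Q_T|^{1/2}$. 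Collecting terms, the test-function identity reads
$$
\int_0^T\int_0^L u_t^2\dd x\dd t + (\text{telescoping boundary-in-time terms}) = \int_0^T\int_0^L\big(ru - G^2\big)u_t\dd x\dd t,
$$
and after moving the telescoping terms to the right and applying Cauchy-Schwarz and Young, one obtains $\frac12\|u_t\|_{L^2(Q_T)}^2 \leq C$, i.e. $\|u_t\|_2\leq C$ with $C$ independent of $\sigma$.

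I do not anticipate a serious obstacle here, since all the hard uniform bounds ($\|u\|_\infty$, $\|u_x\|_\infty$, the nonlocal term) were already established in Lemma \ref{lemma22} with constants independent of $\sigma$, and Proposition \ref{prop25} guarantees enough smoothness ($u \in C^{1+\alpha/2,2+\alpha}$) to justify all the integrations by parts. The only mild subtlety is bookkeeping the $\sigma^2$-weighted term $\frac{\sigma^2}{4}\int_0^L u_x(t)^2\dd x$ at $t=0$ and $t=T$: one must note it appears with a favorable sign at $t=T$ (so it can be discarded) and is bounded at $t=0$ using $\|u_x(0,\cdot)\|_\infty \leq C$ together with $\sigma^2 \leq 1$. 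Thus the final constant depends only on the data and not on $\sigma$, as required for the vanishing-viscosity argument in this section.
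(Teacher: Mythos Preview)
Your proof is correct, but it differs from the paper's. The paper first proves the auxiliary estimate $\sigma^2\|u_{xx}\|_2 \leq C$ by differentiating \eqref{main system}(i) in $x$, multiplying by $u_x$, integrating by parts, and using Young's inequality together with the $L^\infty$ bound on $u_x$ from Lemma~\ref{lemma22}. Once $\sigma^2\|u_{xx}\|_2$ is controlled, the bound on $\|u_t\|_2$ is read off pointwise from the equation $u_t = -\frac{\sigma^2}{2}u_{xx} + ru - G^2$ since $\|u\|_\infty$ and $\|G\|_\infty$ are already bounded. Your route---multiplying \eqref{main system}(i) by $u_t$ and integrating by parts to turn the $u_{xx}u_t$ term into a telescoping $\frac{\sigma^2}{4}\frac{d}{dt}\int u_x^2$---is equally valid and arguably more direct, since it bypasses the second-order estimate entirely. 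Both arguments rest on the same ingredients from Lemma~\ref{lemma22} ($\|u\|_\infty,\|u_x\|_\infty,\|G\|_\infty \leq C$ uniformly in $\sigma$). The paper's detour buys the extra information $\sigma^2\|u_{xx}\|_2 \leq C$, though that bound is not used elsewhere.

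One small remark: your sign bookkeeping at the end is slightly off. After moving the $u_{xx}u_t$ term to the right and integrating by parts, the telescoped contribution is $+\frac{\sigma^2}{4}\int_0^L u_T'(x)^2\dd x - \frac{\sigma^2}{4}\int_0^L u_x(0,x)^2\dd x$, so it is the $t=0$ term that carries the favorable (negative) sign, not the $t=T$ term. This is harmless, since---as you already observe---both endpoint terms are bounded by $\frac{1}{4}\|u_x\|_\infty^2 L \leq C$ regardless of sign.
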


\begin{proof}
	We first prove that $\sigma^2 \|u_{xx}\|_2 \leq C$.
	For this, multiply
	\begin{equation}
	u_{xt} - ru_x + \frac{\sigma^2}{2}u_{xxx} - Gu_{xx} = 0
	\end{equation}
	by $u_{x}$ and integrate by parts.
	We get, after using Young's inequality and  \eqref{estim-1-lemma22},
	$$
	\sigma^4 \int_0^T \int_0^L u_{xx}^2 \dd x \dd t \leq 4\int_0^T \int_0^L (Gu_{x})^2 \dd x \dd t 
	+ 2\sigma^{2}\int_0^L u_T'(x)^2 \dd x \leq C,
	$$
	as desired.
	
	Then the claim follows from \eqref{main system}(i) and Lemma \ref{lemma22}.
\end{proof}

\begin{lemma} \label{lem:u is holder}
	$\|u\|_{C^{1/3}} \leq C$.
\end{lemma}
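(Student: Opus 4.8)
The plan is to combine the two uniform‑in‑$\sigma$ estimates already in hand: the Lipschitz bound $\|u\|_\infty+\|u_x\|_\infty\le C$ from Lemma~\ref{lemma22} and the space–time bound $\|u_t\|_2\le C$ from Lemma~\ref{lem:ut L2}. Since $u$ is uniformly Lipschitz in $x$ and $[0,L]$ is bounded, one has $|u(t,x_1)-u(t,x_2)|\le C|x_1-x_2|\le C|x_1-x_2|^{1/3}$ for free, so the only thing left to prove is a $\tfrac13$–Hölder estimate in the time variable, with a constant that does not depend on $\sigma$.

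To get the latter, fix $0\le t_1<t_2\le T$, set $h:=t_2-t_1$ and fix $x_0\in[0,L]$. For a length scale $\rho\in(0,L]$ to be chosen, let $I_\rho\subset[0,L]$ be an interval of length $\rho$ having $x_0$ as an endpoint (choosing the side so that $I_\rho\subset[0,L]$), and insert the spatial averages over $I_\rho$:
$$
|u(t_1,x_0)-u(t_2,x_0)|\le\sum_{i=1,2}\left|u(t_i,x_0)-\frac1\rho\int_{I_\rho}u(t_i,x)\dd x\right|+\frac1\rho\int_{I_\rho}|u(t_2,x)-u(t_1,x)|\dd x.
$$
Each of the first two terms is bounded by $\|u_x\|_\infty\rho\le C\rho$. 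For the last term, write $u(t_2,x)-u(t_1,x)=\int_{t_1}^{t_2}u_t(t,x)\dd t$ and apply Cauchy–Schwarz (in $t$ and then in $x$):
$$
\frac1\rho\int_{I_\rho}\int_{t_1}^{t_2}|u_t(t,x)|\dd t\dd x\le\frac1\rho\,(h\rho)^{1/2}\,\|u_t\|_{L^2(Q_T)}\le C\,\frac{h^{1/2}}{\rho^{1/2}}.
$$
Hence $|u(t_1,x_0)-u(t_2,x_0)|\le C\big(\rho+h^{1/2}\rho^{-1/2}\big)$.

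Now optimize in $\rho$. If $h^{1/3}\le L$, take $\rho=h^{1/3}$ to obtain $|u(t_1,x_0)-u(t_2,x_0)|\le Ch^{1/3}$; if $h^{1/3}>L$ then $h\ge L^3$ and the crude bound $|u(t_1,x_0)-u(t_2,x_0)|\le 2\|u\|_\infty\le C\le Ch^{1/3}$ works as well. Combining with the Lipschitz (hence $\tfrac13$‑Hölder) estimate in $x$ gives
$$
|u(t,x)-u(s,y)|\le C\big(|t-s|+|x-y|\big)^{1/3}\qquad\text{for all }(t,x),(s,y)\in\overline{Q_T},
$$
that is, $\|u\|_{C^{1/3}}\le C$ with $C$ independent of $\sigma$.

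There is no genuine obstacle here: the estimate is a routine averaging/interpolation argument, and the only points needing a little care are choosing the averaging interval $I_\rho$ on the correct side so that it stays inside $[0,L]$ near the spatial boundary, and disposing of the degenerate regime $h^{1/3}>L$ via the trivial $L^\infty$ bound. The essential input is that Lemma~\ref{lem:ut L2} furnishes an $L^2$ bound on $u_t$ that survives the limit $\sigma\to0$, which is exactly what keeps the resulting Hölder constant $\sigma$‑independent.
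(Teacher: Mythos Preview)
Your proof is correct and follows essentially the same averaging/interpolation argument as the paper: reduce to the time direction via the Lipschitz bound in $x$, compare $u(t_i,x_0)$ to a spatial average over an interval of length $\rho$ (the paper uses $2\eta$), control the averaged time increment by $\|u_t\|_2$ via Cauchy--Schwarz, and optimise the scale to get the $1/3$ exponent. If anything, you are slightly more careful than the paper about keeping $I_\rho\subset[0,L]$ and about the degenerate regime $h^{1/3}>L$, but the core argument is the same.
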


\begin{proof}
	Since $\|u_x\|_\infty \leq C$ it is enough to show that $u$ is 1/3-H\"older continuous in time.
	Let $t_1 < t_2$ in $[0,T]$ be given.
	Set $\eta > 0$ to be chosen later.
	We have, by H\"older's inequality,
	\begin{multline}
	|u(t_1,x)-u(t_2,x)| \leq C\eta + \frac{1}{\eta}\int_{x-\eta}^{x+\eta} |u(t_1,\xi)-u(t_2,\xi)|\dd\xi
	\\
	\leq C\eta + \frac{1}{\eta}\int_{x-\eta}^{x+\eta} \int_{t_1}^{t_2} |u_t(s,\xi)|\dd s \dd\xi
	\\
	\leq C\eta + \frac{1}{\eta}\|u_t\|_2 \sqrt{2\eta |t_2-t_1|}
	\leq C\eta + C|t_2-t_1|^{1/2}\eta^{-1/2}.
	\end{multline}
	Setting $\eta = |t_2-t_1|^{1/3}$ proves the claim.
\end{proof}

To prove compactness estimates for $m$, we will first use the fact that it is the minimizer for an optimization problem.
Let us reintroduce the optimization problem from Section \ref{sec:optlctrl} with $\sigma \geq 0$ as a variable.
We first define the convex functional
\begin{equation} \label{eq:Psi}
\Psi(m,w) := \left\{\begin{array}{cl}
\frac{|w|^2}{m} & \text{if}~m \neq 0,\\
0 &\text{if}~w= 0, m = 0,\\
+\infty &\text{if}~w \neq 0, m=0.
\end{array}\right.
\end{equation}
Now we rewrite the functional $J$, with a slight abuse of notation, as
\begin{multline} \label{control of FP sigma}
J(m,w) = \int_0^T \int_0^L e^{-rt} \left(\Psi(m(t,x),w(t,x))  - \bar{b}w(t,x)\right)\dd x \dd t
\\
+ \bar{c}\int_0^T e^{-rt}\left(\int_0^L  w(t,y)\dd y\right)^2\dd t - \int_0^L e^{-rT} u_T(x)m(T,x)\dd x,
\end{multline}
and consider the problem of minimizing over the class $\s{K}_\sigma$, defined here as the set of all pairs $(m,w) \in L^1((0,T) \times (0,L))_+ \times L^1((0,T) \times (0,L);\bb{R}^d)$ such that
\begin{equation}
\label{eq:mw-constraint}
m_t - \frac{\sigma^2}{2} m_{xx} -  w_x = 0, \ m(0) = m_0
\end{equation}
in the sense of distributions.
By Proposition \ref{prop:minimizers}, for every $\sigma > 0$, $J$ has a minimizer in $\s{K}_\sigma$ given by $(m,w) = (m,G m)$ where  $(u,m)$ is the solution of System \eqref{main system}.
Since $(m,w)$ is a minimizer, we can derive a priori bounds which imply, in particular, that $m(t)$ is H\"older continuous in the Kantorovich-Rubinstein distance on the space of probability measures, with norm bounded uniformly in $\sigma$.
We recall that the Kantorovich-Rubinstein metric on $\s{P}(\Omega)$, the space of Borel probability measures on $\Omega$, is defined by
$$
{\mathbf d} _1(\mu,\nu) = \inf_{\pi \in \Pi(\mu,\nu)}\int_{\Omega \times \Omega} |x-y|\dd\pi(x,y),
$$
where $\Pi(\mu,\nu)$ is the set of all probability measures on $\Omega \times \Omega$ whose first marginal is $\mu$ and whose second marginal is $\nu$.
Here we consider $\Omega = (0,L)$.

\begin{lemma} \label{lem:w2m L1}
	There exists a constant $C$ independent of $\sigma$ such that 
	$$
	\||w|^2/m\|_{L^1((0,T) \times (0,L))} \leq C. 
	$$
	As a corollary, $m$ is $1/2$-H\"older continuous from $[0,T]$ into $\s{P}((0,L))$, and there exists a constant (again denoted $C$) independent of $\sigma$ such that
	\begin{equation} \label{eq:holder in P}
	{\mathbf d}_1(m(t_1),m(t_2)) \leq C|t_1-t_2|^{1/2}.
	\end{equation}
\end{lemma}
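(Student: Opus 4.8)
\noindent The plan is to prove the two assertions in turn: the energy bound $\||w|^2/m\|_{L^1}\le C$ by exploiting that $(m,w)$ minimizes $J$ over $\s{K}_\sigma$, and then \eqref{eq:holder in P} by testing the Fokker--Planck equation against Lipschitz functions and using that bound.

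\textbf{The energy bound.} I would compare $(m,w)$ with the admissible competitor $(\bar m,0)\in\s{K}_\sigma$, where $\bar m$ is the solution of the heat equation $\partial_t\bar m-\frac{\sigma^2}{2}\bar m_{xx}=0$ with Neumann boundary conditions and $\bar m(0)=m_0$; it is smooth, nonnegative and mass-preserving, hence admissible. Since $u_T\ge 0$,
$$
J(\bar m,0)=-\int_0^L e^{-rT}u_T(x)\,\bar m(T,x)\dd x\le 0,
$$
so minimality gives $J(m,w)\le 0$. Expanding $J(m,w)$, one discards the nonnegative term $\bar c\int_0^T e^{-rt}\big(\int_0^L w\big)^2\dd t$, bounds $\int_0^L e^{-rT}u_T\,m(T)\dd x\le\|u_T\|_\infty$ using $\|m(t)\|_{L^1}=1$ (Lemma \ref{lem:positivity}), and estimates the linear term via Young's inequality $\bar b w\le\frac12\,|w|^2/m+\frac{\bar b^2}{2}m$ together with $\int_0^L m(t)\dd x=1$. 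This absorbs one half of $\int_0^T\int_0^L e^{-rt}|w|^2/m$ into the left-hand side and yields $\int_0^T\int_0^L e^{-rt}|w|^2/m\le C$, hence $\||w|^2/m\|_{L^1}\le C$ with $C$ depending only on the data. (All these quantities are finite because the minimizer is $(m,Gm)$ and $G$ is bounded by Lemma \ref{lemma22}; in fact this same observation gives the bound directly, but the comparison argument is the one that will survive the limit $\sigma\to0$.)

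\textbf{H\"older continuity in $\mathbf{d}_1$.} By Kantorovich--Rubinstein duality, $\mathbf{d}_1(m(t_1),m(t_2))=\sup\{\int_0^L\varphi\,(m(t_2)-m(t_1))\dd x:\Lip(\varphi)\le 1\}$, so it suffices to estimate this expression for a fixed $1$-Lipschitz $\varphi$, and I may assume $|t_1-t_2|\le 1$ (otherwise $\mathbf{d}_1\le L$ makes \eqref{eq:holder in P} trivial). Writing the Fokker--Planck equation in conservative form $\partial_t m=\partial_x\big(\tfrac{\sigma^2}{2}m_x+Gm\big)$ and using that the flux $\tfrac{\sigma^2}{2}m_x+Gm$ vanishes at $x=0,L$ by \eqref{main system}(v), integration by parts twice in $x$ against a $C^2$ function $\varphi$ with $\varphi_x(0)=\varphi_x(L)=0$ gives
$$
\int_0^L\varphi\,(m(t_2)-m(t_1))\dd x=\frac{\sigma^2}{2}\int_{t_1}^{t_2}\!\!\int_0^L\varphi_{xx}\,m\dd x\dd t-\int_{t_1}^{t_2}\!\!\int_0^L\varphi_x\,w\dd x\dd t.
$$
The transport term is controlled by Cauchy--Schwarz and $\int_0^L m=1$: it is at most $\|\varphi_x\|_\infty\,|t_1-t_2|^{1/2}\big(\int_0^T\int_0^L|w|^2/m\big)^{1/2}\le C|t_1-t_2|^{1/2}$ by the first part, while the diffusion term is at most $\frac{\sigma^2}{2}\|\varphi_{xx}\|_\infty|t_1-t_2|$. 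For a general $1$-Lipschitz $\varphi$ I would approximate it at scale $\delta$ by mollification followed by a correction of the slopes near the endpoints, obtaining $\varphi_\delta\in C^2([0,L])$ with $\|\varphi-\varphi_\delta\|_\infty\le C\delta$, $\|\varphi_{\delta,x}\|_\infty\le C$, $\varphi_{\delta,x}(0)=\varphi_{\delta,x}(L)=0$ and $\|\varphi_{\delta,xx}\|_\infty\le C/\delta$. Then
$$
\Big|\int_0^L\varphi\,(m(t_2)-m(t_1))\dd x\Big|\le C\delta+C\frac{\sigma^2|t_1-t_2|}{\delta}+C|t_1-t_2|^{1/2},
$$
and the choice $\delta=\sigma|t_1-t_2|^{1/2}\le|t_1-t_2|^{1/2}$ (using $\sigma\le 1$) makes the right-hand side $\le C|t_1-t_2|^{1/2}$, uniformly in $\sigma$ and in $\varphi$; taking the supremum over $\varphi$ yields \eqref{eq:holder in P}.

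I expect the diffusion term in the second part to be the main obstacle. One cannot test directly against the optimal Kantorovich potential, since it is only Lipschitz, and a crude mollification produces a term of order $\sigma^2/\delta$ that is not uniformly controlled. The two points that make it work are the boundary flux condition \eqref{main system}(v), which kills the boundary contributions in the integration by parts, and the hypothesis $\sigma\le 1$ together with the calibrated choice $\delta\sim\sigma|t_1-t_2|^{1/2}$, which absorbs $\sigma^2/\delta$ into $C|t_1-t_2|^{1/2}$. By contrast, the energy bound in the first part is essentially immediate once one recognizes that the heat flow of $m_0$ is an admissible competitor with nonpositive cost.
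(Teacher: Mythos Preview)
Your proof is correct and, for the energy bound, follows the same comparison idea as the paper: the paper compares $(m,w)$ with the competitor ``$(m_0,0)$'' (which, as you rightly use, must actually be the heat flow of $m_0$ for \eqref{eq:mw-constraint} to hold when $\sigma>0$), derives $J(m,w)\le J(\bar m,0)\le 0$ from $u_T\ge 0$, and absorbs the linear term $\bar b\int e^{-rt}w$ by Young's inequality exactly as you do.

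The genuine difference is in the second part. The paper does not argue at all: it simply invokes \cite[Lemma~4.1]{cardaliaguet2015second}, a general result stating that any $(m,w)$ satisfying $m_t-\tfrac{\sigma^2}{2}m_{xx}-w_x=0$ with $\Psi(m,w)\in L^1$ is $1/2$-H\"older into $\s{P}$ with a constant independent of $\sigma$. Your argument---Kantorovich--Rubinstein duality, testing the flux form of the equation against a mollified $1$-Lipschitz function with the Neumann condition built in via even reflection, and balancing $C\delta$ against $C\sigma^2|t_1-t_2|/\delta$ by choosing $\delta=\sigma|t_1-t_2|^{1/2}$---is precisely a self-contained proof of that cited lemma in this setting. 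This buys you independence from the external reference and makes transparent exactly where $\sigma\le 1$ and the no-flux boundary condition \eqref{main system}(v) enter, at the cost of a slightly longer write-up.
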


\begin{proof}
	To see that $\||w|^2/m\|_{L^1((0,T) \times (0,L))} \leq C$, use $(m_0,0) \in \s{K}$ as a comparison.
	By the fact that $J(m,w) \leq J(m_0,0)$ we have
	\begin{multline*}
	\int_0^T \int_0^L e^{-rt}\frac{|w|^2}{2m} \dd x \dd t
	+ \bar{c} \int_0^T e^{-rt} \left(\int_0^L w \dd x\right)^2 \dd t
	\\
	\leq \int_0^L e^{-rT} u_T(m(T)-m_0) \dd x
	+ \frac{\bar{b}}{2}\int_0^T \int_0^L e^{-rt} m \dd x \dd t
	\leq C.
	\end{multline*}
	The H\"older estimate \eqref{eq:holder in P} follows from \cite[Lemma 4.1]{cardaliaguet2015second}.
\end{proof}

We also have compactness in $L^1$, which comes from the following lemma.

\begin{lemma} \label{lem:m unif intble}
	For every $K \geq 0$, we have
	\begin{equation} \label{eq:m uniformlyintegrable}
	\int_{m(t) \geq 2K} m(t) \dd x \leq 2\int_0^L (m_0-K)_+ \dd x
	\end{equation}
	for all $t \in [0,T]$.
\end{lemma}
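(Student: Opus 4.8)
The plan is a Stampacchia-type truncation performed directly on the Fokker--Planck equation. Since $\sigma>0$, Proposition \ref{prop25} guarantees that $(u,m)\in C^{1+\alpha/2,2+\alpha}(\overline{Q_T})^2$, so every manipulation below is licit. First I would record the elementary reduction. If $K=0$ the claimed inequality reads $\|m(t)\|_{L^1}\le 2\|m_0\|_{L^1}$, which is contained in Proposition \ref{prop21}. If $K>0$, then on the set $\{m(t)\ge 2K\}$ one has $m(t)\le 2\big(m(t)-K\big)$, hence
$$
\int_{m(t)\ge 2K}m(t)\dd x\le 2\int_{m(t)\ge 2K}\big(m(t)-K\big)\dd x\le 2\int_0^L\big(m(t)-K\big)_+\dd x ,
$$
so it is enough to prove that $t\mapsto\int_0^L(m(t)-K)_+\dd x$ is non-increasing, which in turn yields \eqref{eq:m uniformlyintegrable}.

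For the monotonicity, I would rewrite \eqref{main system}(ii) as $m_t=\partial_x J$ with $J:=\tfrac{\sigma^2}{2}m_x+G(u_x,m)\,m$, and use that $J(t,0)=J(t,L)=0$ by \eqref{main system}(v). Testing against a smooth, non-decreasing approximation of $\mathbf 1_{\{m>K\}}$ and passing to the limit (using the coarea formula) gives
$$
\frac{\dd}{\dd t}\int_0^L(m-K)_+\dd x
=-\frac{\sigma^2}{2}\!\!\sum_{x:\,m(t,x)=K}\!\!|m_x(t,x)|\ +\ K\!\int_{m(t)>K}\!\!G_x(u_x,m)\dd x\ +\ (\text{lateral boundary terms}),
$$
where the diffusion contribution is $\le 0$, and the boundary terms cancel because $J$ vanishes on $\{x=0,L\}$ while, by \eqref{main system}(iv), $u_x(t,0)=u_x(t,L)=0$, so that $G(u_x,m)$ takes the common value $\tfrac12\big(b+c\int_0^L u_x m\,\dd y\big)$ at both endpoints. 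Since $G_x(u_x,m)=-\tfrac12 u_{xx}$, the remaining drift term would be controlled by combining the Hamilton--Jacobi equation \eqref{main system}(i) with the $\sigma$-independent bounds $\|u\|_\infty+\|u_x\|_\infty\le C$ of Lemma \ref{lemma22}; then one integrates in time from $0$ to $t$ and concludes.

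The main obstacle is precisely this drift term. When one compares two solutions of a Fokker--Planck equation, the transport term automatically vanishes on the coincidence set, which is what makes the $L^1$-norm of the positive part of their difference non-increasing; here, by contrast, $G(u_x,m)\,m$ does \emph{not} vanish on the level set $\{m=K\}$ (where it equals $G(u_x,K)\,K$), so the maximum principle alone does not close the estimate. The natural remedy, which I would carry out, is to compare $m$ not with the constant $K$ but with the solution $\tilde m$ of the \emph{same} linear Fokker--Planck equation issued from the truncation $m_0\wedge K$: then $m-\tilde m\ge 0$ solves a conservative linear equation with no-flux boundary conditions, so $\int_0^L(m(t)-\tilde m(t))_+\dd x$ is non-increasing, and $\int_0^L(m(t)-\tilde m(t))\dd x=\int_0^L(m_0-K)_+\dd x$ by mass conservation (Lemma \ref{lem:positivity}); one then absorbs the discrepancy between $\tilde m$ and the level $K$ using this same mass identity. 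Making this absorption step quantitative — equivalently, pinning down the sign of the drift contribution above — is the only genuinely delicate point of the argument.
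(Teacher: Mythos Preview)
Your reduction to the monotonicity of $t\mapsto\int_0^L(m(t)-K)_+\dd x$ is exactly right and coincides with the paper's target. The gap is in how you treat the drift term.

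In your direct Stampacchia computation you arrive at the contribution $K\int_{\{m>K\}}G_x\dd x=-\tfrac{K}{2}\int_{\{m>K\}}u_{xx}\dd x$, and you propose to control it through $\|u\|_\infty+\|u_x\|_\infty$. That only yields a bound of the form $C K$ times the number of connected components of $\{m(t)>K\}$, which neither vanishes nor has a sign; at best you get a Gronwall inequality with an uncontrolled right-hand side, not the monotonicity you need. Your alternative via $\tilde m$ (the solution of the \emph{same} linear Fokker--Planck equation from $m_0\wedge K$) gives $\int_0^L(m-\tilde m)(t)\dd x=\int_0^L(m_0-K)_+\dd x$ by mass conservation, but the ``absorption step'' you leave open really does not close: to deduce $\int_{\{m\ge 2K\}}m\le 2\int(m-\tilde m)$ you would need $\tilde m\le K$ for all times, and this fails in general because the equation for $\tilde m$ carries the zeroth-order term $G_x\tilde m=-\tfrac12 u_{xx}\tilde m$, so the parabolic maximum principle does not propagate the bound $\tilde m(0)\le K$.

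The paper closes the drift term by a different device: a \emph{two-parameter} convex approximation $\phi_{\alpha,\delta}$ of $s\mapsto(s-K)_+$ with the feature that $\phi_{\alpha,\delta}''(s)\le C_\delta\,\alpha$ uniformly in $s$. Multiplying \eqref{main system}(ii) by $\phi_{\alpha,\delta}'(m)$, integrating by parts, and applying Young's inequality gives
\[
\int_0^L \phi_{\alpha,\delta}(m(t))\dd x \le \int_0^L \phi_{\alpha,\delta}(m_0)\dd x + \frac{\|G\|_\infty^2}{2\sigma^2}\int_0^t\!\!\int_0^L \phi_{\alpha,\delta}''(m)\,m^2\dd x\dd s .
\]
The error term on the right carries the factor $\alpha$ through $\phi_{\alpha,\delta}''$ and therefore vanishes as $\alpha\to 0$ (for fixed $\sigma>0$ and $\delta$; the $\sigma^{-2}$ is harmless at this stage). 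One then sends $\delta\to 0$ to obtain $\int_0^L(m(t)-K)_+\dd x\le\int_0^L(m_0-K)_+\dd x$, which is a $\sigma$-free inequality. The point you are missing is thus not a bound on the drift, but a regularization chosen so that the drift contribution is $O(\alpha)$ and disappears in the limit.
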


\begin{proof}
	Let $K \geq 0$ be given.
	We define the following auxiliary functions:
	\begin{equation}
	\phi_{\alpha,\delta}(s) := \left\{
	\begin{array}{ll}
	0 & \text{if}~s \leq K,\\
	\frac{1}{6}(1+\alpha)\alpha \delta^{\alpha-2}(s-K)^3 & \text{if}~K \leq s \leq K + \delta,\\
	\frac{1}{6}(1+\alpha)\alpha \delta^{\alpha+1} + \frac{1}{2}(1+\alpha)\alpha \delta^{\alpha}(s-K) + (s-K)^{1+\alpha} & \text{if}~ s \geq K + \delta,
	\end{array}
	\right.
	\end{equation}
	where $\alpha,\delta \in (0,1)$ are parameters going to zero.
	For reference we note that
	\begin{equation}
	\phi_{\alpha,\delta}'(s) = \left\{
	\begin{array}{ll}
	0 & \text{if}~s \leq K,\\
	\frac{1}{2}(1+\alpha)\alpha \delta^{\alpha-2}(s-K)^2 & \text{if}~K \leq s \leq K + \delta,\\
	\frac{1}{2}(1+\alpha)\alpha \delta^{\alpha} + (1+\alpha)(s-K)^{\alpha} & \text{if}~ s \geq K + \delta,
	\end{array}
	\right.
	\end{equation}
	and
	\begin{equation}
	\phi_{\alpha,\delta}''(s) = \left\{
	\begin{array}{ll}
	0 & \text{if}~s \leq K,\\
	(1+\alpha)\alpha \delta^{\alpha-2}(s-K) & \text{if}~K \leq s \leq K + \delta,\\
	(1+\alpha)\alpha(s-K)^{\alpha-1} & \text{if}~ s \geq K + \delta.
	\end{array}
	\right.
	\end{equation}
	Observe that  $\phi_{\alpha,\delta}''$ is continuous and non-negative.
	Multiply \eqref{main system}(ii) by $\phi_{\alpha,\delta}'(m)$ and integrate by parts.
	After using Young's inequality we have
	\begin{equation}
	\int_0^L \phi_{\alpha,\delta}(m(t))\dd x \leq \int_0^L \phi_{\alpha,\delta}(m_0) \dd x
	\\
	+ \frac{\|G\|_\infty^2}{2\sigma^2}\int_0^t \int_0^L \phi_{\alpha,\delta}''(m)m^2 \dd x \dd t.
	\end{equation}
	Since $\phi_{\alpha,\delta}''(s) \leq (1+\alpha)\alpha \delta^{-2}$, after taking $\alpha \to 0$ we have
	\begin{equation}
	\int_0^L \phi_{\delta}(m(t))\dd x \leq \int_0^L \phi_{\delta}(m_0) \dd x,
	\end{equation}
	where $\phi_{\delta}(s) = (s-K)\chi_{[K+\delta,\infty)}(s)$.
	Now letting $\delta \to 0$ we see that
	\begin{equation}
	\int_0^L (m(t) - K)_+ \dd x \leq \int_0^L (m_0-K)_+ \dd x,
	\end{equation}
	where $s_+ := (s+|s|)/2$ denotes the positive part. Whence
	\begin{equation}
	\int_0^L (m_\sigma(t) - K)_+ \dd x \leq \int_0^L (m_0-K)_+ \dd x,
	\end{equation}
	which also implies \eqref{eq:m uniformlyintegrable}.
\end{proof}

We also have a compactness estimate for the function $t \mapsto \int_0^L u_x(t,y)m(t,y) \dd y$.

\begin{lemma}
	\label{lem:mx regularity}
	$\sigma^2 \left(\int_{0}^{T}\int_0^L \frac{|m_x|^2}{m + 1}\dd x \dd t\right)^{1/2} \leq C$.
\end{lemma}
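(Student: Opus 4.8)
The estimate is an $\sigma$-weighted energy bound on $m_x$, and the natural tool is a renormalized-type identity for the Fokker–Planck equation \eqref{main system}(ii), testing against a function of $m$ that produces the quantity $|m_x|^2/(m+1)$. Concretely, let $\beta(s) := \log(1+s)$, so that $\beta'(s) = 1/(1+s)$ and $\beta''(s) = -1/(1+s)^2 < 0$. The plan is to multiply \eqref{main system}(ii) by $\beta'(m)$ and integrate over $Q_T$, integrating by parts in $x$ and using the Neumann-type boundary condition \eqref{main system}(v), namely $\frac{\sigma^2}{2}m_x + Gm = 0$ on $\{0,L\}$, to kill the boundary terms. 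This yields, schematically,
\[
\int_0^L \beta(m(T)) - \int_0^L \beta(m_0) + \frac{\sigma^2}{2}\int_0^T\int_0^L \frac{|m_x|^2}{(1+m)^2}\dd x \dd t
= -\int_0^T\int_0^L G\,m\, \beta''(m)\, m_x \dd x \dd t,
\]
where the sign works in our favor because $-\beta'' > 0$ puts the diffusion term on the left with a good sign. The right-hand side is then controlled by Young's inequality: writing $G m \beta''(m) m_x = \big(\sqrt{\sigma}\,\tfrac{m_x}{1+m}\big)\cdot\big(\tfrac{G m}{\sqrt\sigma (1+m)}\big)$ we absorb half the diffusion term into the left and are left with $\frac{1}{\sigma}\int_0^T\int_0^L G^2 \frac{m^2}{(1+m)^2}\dd x\dd t \le \frac{1}{\sigma}\|G\|_\infty^2 \, T L$.

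This last bound is $O(1/\sigma)$, which is too weak; to get the stated $\sigma^2$-weighted estimate with an $\sigma$-independent constant we should instead put the weight so that the diffusion term appears as $\sigma^2 |m_x|^2/(m+1)$, not $\sigma^2|m_x|^2/(1+m)^2$. The cleaner route is therefore to test against $\beta'(m)$ with $\beta(s)$ chosen so that $-\beta''(s) = 1/(s+1)$ up to constants, i.e. $\beta(s) = (s+1)\log(s+1) - s$, which is convex with $\beta'(s) = \log(s+1)$ and $\beta''(s) = 1/(s+1) > 0$. Then testing \eqref{main system}(ii) against $\beta'(m)$ and integrating by parts gives
\[
\int_0^L \beta(m(T)) - \int_0^L \beta(m_0)
= -\frac{\sigma^2}{2}\int_0^T\int_0^L \frac{|m_x|^2}{m+1}\dd x \dd t
+ \int_0^T\int_0^L G\, m\, \frac{m_x}{m+1}\dd x \dd t,
\]
so that
\[
\frac{\sigma^2}{2}\int_0^T\int_0^L \frac{|m_x|^2}{m+1}\dd x \dd t
= \int_0^L \beta(m_0) - \int_0^L \beta(m(T)) + \int_0^T\int_0^L \frac{G\, m}{m+1}\, m_x\dd x \dd t.
\]
Here the boundary terms from integration by parts involve $\big[\frac{\sigma^2}{2}m_x + G m\big]\beta'(m)$ evaluated at $x \in \{0,L\}$, which vanish by \eqref{main system}(v). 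Since $\beta \ge 0$ we may drop $-\int_0^L\beta(m(T))$; and $\int_0^L \beta(m_0)$ is a data-dependent constant because $m_0 \in C^{2+\gamma}$.

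For the remaining term I would \textbf{not} use a naive Young's inequality on $\frac{Gm}{m+1} m_x$ (that reproduces the $1/\sigma$ loss); instead observe $\frac{m}{m+1} \le 1$ and $|G| \le C$ by Lemma \ref{lemma22} (the bound on $G$ being $\sigma$-independent), and split
\[
\Big|\int_0^T\int_0^L \frac{Gm}{m+1}m_x\,\dd x\dd t\Big|
\le C \int_0^T\int_0^L |m_x|\,\dd x\dd t
= C\int_0^T\int_0^L \frac{|m_x|}{\sqrt{m+1}}\sqrt{m+1}\,\dd x\dd t.
\]
By Cauchy–Schwarz this is bounded by $C\big(\int_0^T\int_0^L \frac{|m_x|^2}{m+1}\big)^{1/2}\big(\int_0^T\int_0^L (m+1)\big)^{1/2}$, and the second factor is $\le C$ since $\|m(t)\|_{L^1} = 1$ for all $t$ by Proposition \ref{prop21}. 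Denoting $A := \big(\int_0^T\int_0^L \frac{|m_x|^2}{m+1}\big)^{1/2}$, the identity becomes $\frac{\sigma^2}{2}A^2 \le C + C A$. This is a quadratic inequality in $A$; but note the left side carries a factor $\sigma^2 \le 1$, so it only gives $A \le C/\sigma$ again unless we are more careful.

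\textbf{The main obstacle} is precisely this: obtaining an $\sigma$-independent bound on $\sigma A$ (equivalently $\sigma^2 A^2$, the claimed quantity) rather than on $A$ itself. The resolution is to keep $\sigma$ attached to the gradient throughout: set $B := \sigma A = \big(\sigma^2\int_0^T\int_0^L \frac{|m_x|^2}{m+1}\big)^{1/2}$, so the identity reads $\tfrac12 B^2 \le C + C\,\sigma^{-1} B \cdot \sigma = C + C\,(B/\sigma)\cdot$ — which still has the stray $\sigma$. The correct bookkeeping is: the cross term is $\le C A = C B/\sigma$, so $\tfrac12 B^2 \le C + C B/\sigma$; multiplying by $\sigma^2$: $\tfrac12 \sigma^2 B^2 \le C\sigma^2 + C\sigma B \le C + C\sigma B$. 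Hmm — this shows $\sigma^2 B^2$, i.e. $\sigma^4 A^2$, is controlled, which is one power of $\sigma$ stronger than claimed, \emph{provided} we can bound $\sigma B$, e.g. if $B$ is a priori finite (which it is, for fixed $\sigma > 0$, by Proposition \ref{prop25}). So the honest statement of the argument is: for each fixed $\sigma\in(0,1]$ the quantity $A$ is finite by the classical regularity of Proposition \ref{prop25}; the energy identity then gives $\tfrac12\sigma^2 A^2 \le C + C A$ with $C$ independent of $\sigma$, and since $\sigma A \le A < \infty$ we may write $CA = C(\sigma A)/\sigma$; the clean way out is to instead absorb using $CA \le \tfrac14 \sigma^2 A^2 \cdot (4C/(\sigma^2 \cdot (\text{something})))$ — no. The genuinely correct move, which I would carry out in detail, is to note that Young's inequality on the cross term should be applied \emph{with the $\sigma$ already present}: rewrite $\frac{Gm}{m+1}m_x = \big(\frac{\sigma m_x}{\sqrt{m+1}}\big)\cdot\big(\frac{Gm}{\sigma\sqrt{m+1}}\big)$ — but this reintroduces $1/\sigma^2$. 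The only way $\sigma^2 A^2 = O(1)$ can hold is if the cross term is genuinely $O(1)$ with no $\sigma$ weight and one uses $\frac12 \sigma^2 A^2 \le C + CA$ together with a crude a-priori finiteness of $A$ to bootstrap: if $\sigma^2 A^2 \le C + CA$ then either $A \le 1$ (done) or $A \ge 1$, whence $\sigma^2 A^2 \le 2CA$, i.e. $\sigma^2 A \le 2C$, i.e. $\sigma A \le 2C/(\sigma A)\cdot(\sigma A)$ — circular. I expect the actual paper sidesteps this by a smarter choice of test function (e.g. $\beta'(m) = \log(m+1)$ combined with an $L^\infty$-in-time control of $\int (m+1)\log(m+1)$ from Lemma \ref{lem:positivity}), so that the cross term comes pre-weighted by $\sigma$; concretely I would test against $\sigma^2\beta'(m)$ and track that the cross term inherits a factor $\sigma$ from an integration by parts that moves one derivative onto $G = G(u_x,m)$, using $\sigma^2\|u_{xx}\|_2 \le C$ from Lemma \ref{lem:ut L2} together with $\|G_x\|$ estimates — this is where Lemma \ref{lem:ut L2} earns its keep. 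That coupling — spending the $\sigma^2\|u_{xx}\|_2 \le C$ bound to control $\partial_x G$ and thereby gaining the missing power of $\sigma$ in the cross term — is, I expect, the crux of the proof, and the rest is the routine renormalization and Cauchy–Schwarz bookkeeping sketched above.
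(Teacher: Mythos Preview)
You have the right test function and the right energy identity: testing \eqref{main system}(ii) against $\log(m+1)$ and integrating by parts (the boundary terms vanish by \eqref{main system}(v)) gives exactly
\[
\frac{\sigma^2}{2}\int_0^T\!\!\int_0^L \frac{|m_x|^2}{m+1}\,\dd x\,\dd t
\leq \int_0^L \beta(m_0)\,\dd x + \left|\int_0^T\!\!\int_0^L \frac{G m}{m+1}\,m_x\,\dd x\,\dd t\right|,
\]
with $\beta(s)=(s+1)\log(s+1)-s$. Your error is not in the analysis but in reading the target: the Lemma asserts $\sigma^2 A \leq C$ where $A := \big(\int_0^T\int_0^L |m_x|^2/(m+1)\big)^{1/2}$, \emph{not} $\sigma A \leq C$. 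You actually prove this: your Cauchy--Schwarz route gives $\tfrac{\sigma^2}{2}A^2 \leq C + CA$, and your own case split yields $\sigma^2 A \leq 2C$ --- at which point you are done, but you keep going because you are chasing the wrong inequality.

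The paper's proof is even shorter and is precisely the Young's-inequality move you wrote down and then dismissed (``this reintroduces $1/\sigma^2$''): split $\big|\tfrac{Gm}{m+1}m_x\big| \leq \tfrac{\sigma^2}{4}\tfrac{|m_x|^2}{m+1} + \tfrac{1}{\sigma^2}\tfrac{G^2 m^2}{m+1}$, absorb, and multiply through by $\sigma^2$ to get
\[
\frac{\sigma^4}{4}A^2 \leq \sigma^2\!\int_0^L\!\beta(m_0)\,\dd x + \|G\|_\infty^2\!\int_0^T\!\!\int_0^L\!\frac{m^2}{m+1}\,\dd x\,\dd t
\leq \int_0^L\!\beta(m_0)\,\dd x + \|G\|_\infty^2\!\int_0^T\!\!\int_0^L\! m\,\dd x\,\dd t \leq C,
\]
using $\sigma\leq 1$, $m^2/(m+1)\leq m$, $\|m(t)\|_{L^1}=1$, and the $\sigma$-independent bound on $\|G\|_\infty$ from Lemma~\ref{lemma22}. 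So $\sigma^4 A^2 \leq C$, i.e.\ $\sigma^2 A \leq C$. The $1/\sigma^2$ you feared is harmless because the claim only asks for $\sigma^4 A^2$ bounded, not $\sigma^2 A^2$. Lemma~\ref{lem:ut L2} plays no role whatsoever here; your final speculation about needing $\sigma^2\|u_{xx}\|_2$ to control $\partial_x G$ is a red herring.
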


\begin{proof}
	Multiply the Fokker-Planck equation by $\log(m+1)$ and integrate by parts.
	After using Young's inequality, we obtain
	\begin{multline*}
	\frac{\sigma^4}{4}\int_{0}^{T}\int_0^L \frac{|m_x|^2}{m + 1}\dd x \dd t
	\leq \sigma^2 \int_0^L \left((m_0+1)\log(m_0+1) - m_0\right)\dd x
	+ \|G\|_\infty^2 \int_{0}^{T}\int_0^L \frac{m^2}{m+1} 
	\\
	\leq \int_0^L \left((m_0+1)\log(m_0+1) - m_0\right)\dd x
	+ \|G\|_\infty^2 \int_{0}^{T}\int_0^L m \dd x \dd t 
	\leq C.
	\end{multline*}
\end{proof}

\begin{lemma} \label{lem:I(t) Holder}
	Let $\zeta \in C_c^\infty((0,L))$.
	Then $t \mapsto \int_0^L u_x(t,x)m(t,x)\zeta(x)\dd x$ is $1/2$-H\"older continuous, and in particular,
	\begin{equation}
	\left|\left[\int_0^L u_x(t,x)m(t,x)\zeta(x)\dd x\right]_{t_1}^{t_2}\right| \leq C_\zeta|t_1-t_2|^{1/2}
	\end{equation}
	where $C_\zeta$ is a constant that depends on $\zeta$ but not on $\sigma$.
\end{lemma}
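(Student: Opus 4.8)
The plan is to differentiate the scalar quantity $I_\zeta(t):=\int_0^L u_x(t,x)m(t,x)\zeta(x)\dd x$ in time, show that the ``dangerous'' terms (those that blow up as $\sigma\to0$) cancel against each other, and then estimate what is left using the a priori bounds already collected. Since $\sigma>0$, the solution $(u,m)$ of \eqref{main system} is smooth by Proposition \ref{prop25}, so all the integrations by parts below are legitimate. Write $f(t):=b+c\int_0^L u_x(t,y)m(t,y)\dd y$; then $G=\tfrac12(f(t)-u_x)$ depends on $x$ only through $-\tfrac12 u_x$, so $G_x=-\tfrac12 u_{xx}$ and, differentiating \eqref{main system}(i) in $x$,
\[
u_{xt} = -\frac{\sigma^2}{2}u_{xxx} + r u_x + G u_{xx}.
\]

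First I would compute $\frac{\dd}{\dd t}I_\zeta = \int_0^L u_{xt}m\zeta\dd x + \int_0^L u_x m_t\zeta\dd x$, inserting the display above into the first integral and \eqref{main system}(ii), i.e.\ $m_t=\tfrac{\sigma^2}{2}m_{xx}+(Gm)_x$, into the second, and then integrating by parts in $x$. Because $\zeta\in C_c^\infty((0,L))$, every boundary term at $x=0,L$ vanishes. The point is that the two contributions of the form $\pm\tfrac{\sigma^2}{2}\int_0^L u_{xx}m_x\zeta\dd x$ cancel, and likewise the two contributions $\pm\int_0^L G u_{xx}m\zeta\dd x$ cancel, leaving the clean identity
\begin{multline*}
\frac{\dd}{\dd t}\int_0^L u_x m\zeta\dd x = \frac{\sigma^2}{2}\int_0^L u_{xx}m\zeta'\dd x + r\int_0^L u_x m\zeta\dd x \\
- \frac{\sigma^2}{2}\int_0^L u_x m_x\zeta'\dd x - \int_0^L G u_x m\zeta'\dd x.
\end{multline*}
This cancellation, which removes exactly the two terms not controlled uniformly in $\sigma$, is the heart of the argument and is really a manifestation of the duality structure between the Hamilton--Jacobi and Fokker--Planck equations; getting it right (and checking that no boundary term survives) is the only delicate step.

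It then remains to integrate this identity over $[t_1,t_2]$ and bound the four terms. By Lemma \ref{lemma22} we have $\|u\|_\infty+\|u_x\|_\infty\le C$ and hence $\|G\|_\infty\le C$, and by Proposition \ref{prop21} $\|m(t)\|_{L^1}=1$; these immediately bound the second and fourth terms by $C_\zeta|t_1-t_2|\le C_\zeta|t_1-t_2|^{1/2}$. For the first term, Lemma \ref{lem:m unif intble} gives a bound $\|m\|_{L^\infty(Q_T)}\le C$ independent of $\sigma$, so that $\tfrac{\sigma^2}{2}\int_{t_1}^{t_2}\!\int_0^L|u_{xx}|\,m\,|\zeta'|\le C_\zeta\,\sigma^2\,|t_1-t_2|^{1/2}\|u_{xx}\|_{L^2(Q_T)}$ after Cauchy--Schwarz in $(t,x)$, and this is $\le C_\zeta|t_1-t_2|^{1/2}$ because $\sigma^2\|u_{xx}\|_{L^2(Q_T)}\le C$ was established in the proof of Lemma \ref{lem:ut L2}. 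For the third term, $\int_0^L|m_x|\dd x\le\big(\int_0^L\tfrac{|m_x|^2}{m+1}\dd x\big)^{1/2}\big(\|m(t)\|_{L^1}+L\big)^{1/2}$ by Cauchy--Schwarz, so another Cauchy--Schwarz in $t$ together with Lemma \ref{lem:mx regularity} gives $\tfrac{\sigma^2}{2}\int_{t_1}^{t_2}\!\int_0^L|u_x|\,|m_x|\,|\zeta'|\le C_\zeta\,\sigma^2\,|t_1-t_2|^{1/2}\big(\int_0^T\!\!\int_0^L\tfrac{|m_x|^2}{m+1}\big)^{1/2}\le C_\zeta|t_1-t_2|^{1/2}$. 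Summing the four estimates yields the claimed $1/2$-H\"older bound with a constant depending on $\zeta$ but not on $\sigma$.
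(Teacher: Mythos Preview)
Your proof is correct and follows essentially the same approach as the paper: differentiate $I_\zeta$ in time, exploit the duality cancellation between the Hamilton--Jacobi and Fokker--Planck equations, and estimate the surviving terms with the $\sigma$-uniform bounds already in hand. The only organizational difference is that you keep the term $\tfrac{\sigma^2}{2}\int u_{xx}m\zeta'$ and control it via the $L^\infty$ bound on $m$ (from Lemma~\ref{lem:m unif intble}) together with $\sigma^2\|u_{xx}\|_{L^2}\le C$ (from the proof of Lemma~\ref{lem:ut L2}); the paper instead integrates this term by parts once more, turning it into $-\tfrac{\sigma^2}{2}\int u_x m_x\zeta' - \tfrac{\sigma^2}{2}\int u_x m\zeta''$, which is slightly more economical since the $\zeta''$ piece is then trivially bounded by $\|u_x\|_\infty\|\zeta''\|_\infty\|m(t)\|_{L^1}$ and no appeal to Lemmas~\ref{lem:ut L2} or~\ref{lem:m unif intble} is needed. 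Either way the argument goes through.
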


\begin{proof}
	Integration by parts yields
	\begin{multline}
	\left[ e^{-rt}\int_0^L u_x(t,x)m(t,x)\zeta(x)\dd x\right]_{t_1}^{t_2}
	\\
	= -\sigma^2 \int_{t_1}^{t_2}e^{-rs}\int_0^L u_x(t,x)m_x(t,x)\zeta'(x)\dd x \dd s
	- \frac{\sigma^2}{2}  \int_{t_1}^{t_2}e^{-rs}\int_0^L u_x(t,x)m(t,x)\zeta''(x)\dd x \dd s
	\\
	- \frac{1}{2}\int_{t_{1}}^{t_{2}}\left\{\left(b+c\int_{0}^{L}u_{x}(t)m(t)\right)\int_{0}^{L}\zeta_{x}u_{x}m \dd x - \int_{0}^{L}
	\zeta_{x}u_{x}^{2}m \dd x \right\} \dd s.
	\end{multline}
	On the one hand,
	$$
	\left|\frac{\sigma^2}{2}  \int_{t_1}^{t_2}e^{-rs}\int_0^L u_x(t,x)m(t,x)\zeta''(x)\dd x \dd s\right|
	\leq \frac{\|u_x\|_\infty \|\zeta''\|_\infty}{2}|t_1-t_2| \leq C\|\zeta''\|_\infty|t_1-t_2|,
	$$
	and
	$$
	 \left| \int_{t_{1}}^{t_{2}}\left\{\left(b+c\int_{0}^{L}u_{x}(t)m(t)\right)\int_{0}^{L}\zeta_{x}u_{x}m \dd x - \int_{0}^{L}
	 \zeta_{x}u_{x}^{2}m \dd x \right\} \dd s \right| \leq C \| \zeta'\|_{\infty}\|u_{x}\|_{\infty}^{2}|t_{1}-t_{2}|.
	$$
	On the other hand, by H\"older's inequality and Lemma \ref{lem:mx regularity} we get
	\begin{multline*}
	\left|\sigma^2 \int_{t_1}^{t_2}e^{-rs}\int_0^L u_x(t,x)m_x(t,x)\zeta'(x)\dd x \dd s\right|
	\\
	\leq \|u_x\|_\infty \|\zeta'\|_\infty  \sigma^2 \left(\int_{t_1}^{t_2}\int_0^L \frac{|m_x|^2}{m + 1}\dd x \dd s
	\right)^{1/2}\left(\int_{t_1}^{t_2}\int_0^L (m + 1)\dd x \dd s\right)^{1/2}
	\\
	\leq C\|\zeta'\|_\infty(L+1)^{1/2}|t_1-t_2|^{1/2}.
	\end{multline*}
\end{proof}

\begin{corollary}
	\label{cor:I(t) equicontinuity}
	The function $t \mapsto \int_0^L u_x(t,x)m(t,x) \dd x$ is uniformly continuous with modulus of continuity independent of $\sigma$.
\end{corollary}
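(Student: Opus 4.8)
The plan is to combine the localized $1/2$-H\"older estimate of Lemma~\ref{lem:I(t) Holder} with a $\sigma$-uniform control on the mass of $m(t)$ near the endpoints $x=0$ and $x=L$. The point is that the constant $C_\zeta$ in Lemma~\ref{lem:I(t) Holder} blows up as the cutoff $\zeta$ tends to the constant function $1$, so one cannot simply take a limit; instead one fixes a cutoff at a scale $\delta$, uses the H\"older bound \emph{for that fixed $\zeta$} (which is $\sigma$-uniform), and estimates the error of dropping the cutoff by a $\sigma$-uniform tightness estimate for $m$.

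First I would record two uniform bounds. By Lemma~\ref{lemma22}, $\|u_x\|_\infty \le C$ with $C$ independent of $\sigma$. By Lemma~\ref{lem:m unif intble} applied with $K=\|m_0\|_\infty$ (finite since $m_0 \in C^{2+\gamma}([0,L])$, and positive since $m_0$ is a probability density), the set $\{m(t)\ge 2\|m_0\|_\infty\}$ has measure zero for every $t$; hence $0 \le m(t,x) \le 2\|m_0\|_\infty$ for a.e.\ $x$, uniformly in $t \in [0,T]$ and $\sigma \in (0,1]$. (If one wished to avoid the $L^\infty$ bound, Lemma~\ref{lem:m unif intble} together with $\int_0^L (m_0-K)_+\dd x \to 0$ as $K\to\infty$ gives equi-integrability of $\{m(t)\}_{t,\sigma}$, which is all that is used below.) In particular, for every $\delta \in (0,L/2)$,
$$
\sup_{t\in[0,T],\ \sigma\in(0,1]} \int_{[0,\delta]\cup[L-\delta,L]} m(t,x)\dd x \le 4\delta\,\|m_0\|_\infty .
$$

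Now fix $\tau>0$. Choose $\delta\in(0,L/2)$ so small that $4\delta\,\|m_0\|_\infty\,\|u_x\|_\infty < \tau/3$, and pick $\zeta=\zeta_\delta \in C_c^\infty((0,L))$ with $0\le\zeta\le 1$ and $\zeta\equiv 1$ on $[\delta,L-\delta]$. Writing $I(t):=\int_0^L u_x(t,x)m(t,x)\dd x$ and $I_\zeta(t):=\int_0^L u_x(t,x)m(t,x)\zeta(x)\dd x$, we get for every $t$
$$
|I(t)-I_\zeta(t)| \le \|u_x\|_\infty \int_{\{\zeta<1\}} m(t,x)\dd x \le 4\delta\,\|m_0\|_\infty\,\|u_x\|_\infty < \tau/3 .
$$
By Lemma~\ref{lem:I(t) Holder} there is a constant $C_\zeta$ (depending on $\delta$ but not on $\sigma$) with $|I_\zeta(t_1)-I_\zeta(t_2)| \le C_\zeta|t_1-t_2|^{1/2}$; choose $\rho>0$ with $C_\zeta\rho^{1/2}<\tau/3$. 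Then for $|t_1-t_2|<\rho$ the triangle inequality
$$
|I(t_1)-I(t_2)| \le |I(t_1)-I_\zeta(t_1)| + |I_\zeta(t_1)-I_\zeta(t_2)| + |I_\zeta(t_2)-I(t_2)| < \tau ,
$$
and $\rho$ depends only on $\tau$ and the data, not on $\sigma$. This is exactly the asserted $\sigma$-uniform modulus of continuity.

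The only genuine obstacle is the non-uniformity of $C_\zeta$ as $\zeta\uparrow 1$, and the argument is organized precisely to sidestep it: the $\sigma$-uniform ingredient (the H\"older bound) is invoked only for a fixed $\zeta$, while the remaining, $\delta$-dependent error is a boundary-mass term controlled by Lemma~\ref{lem:m unif intble} (or merely by $m_0\in L^\infty$). No further estimates are needed beyond those already established.
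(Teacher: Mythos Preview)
Your proof is correct and follows the same strategy as the paper: split $I(t)$ into a cutoff part $I_\zeta(t)$ controlled by Lemma~\ref{lem:I(t) Holder} and a boundary remainder controlled via Lemma~\ref{lem:m unif intble}, then choose the parameters in the right order. The only difference is a minor simplification: the paper bounds the boundary mass by $4K\delta + 2\int_0^L (m_0-K)_+\dd x$ for a free parameter $K$ and then chooses $K$ large before choosing $\delta$, whereas you take $K=\|m_0\|_\infty$ at the outset (legitimate since $m_0\in C^{2+\gamma}$) to get the uniform bound $m(t,\cdot)\le 2\|m_0\|_\infty$ and hence a one-step choice of $\delta$; your parenthetical remark about equi-integrability is exactly the paper's route.
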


\begin{proof}
	Let $\delta \in (0,L)$ and fix $\zeta \in C_c^\infty((0,L))$ be such that $0 \leq \zeta \leq 1$ and $\zeta \equiv 1$ on $[\delta,L-\delta]$.
	Notice that for any $t_1,t_2 \in [0,T]$
	\begin{equation} \label{eq:I(t) equicty 1}
	\left|\left[\int_0^L u_x(t,x)m(t,x)(1-\zeta(x)) \dd x\right]_{t_1}^{t_2}\right|
	\leq \|u_x\|_\infty \int_{[0,L] \setminus [\delta,L-\delta]} [m(t_1,x) + m(t_2,x)] \dd x.
	\end{equation}
	Now by Lemma \ref{lem:m unif intble} we have
	\begin{multline} \label{eq:I(t) equicty 2}
	\int_{[0,L] \setminus [\delta,L-\delta]} m(t,x) \dd x
	\\
	\leq \int_{\{ m(t) < 2K\} \cap [0,L] \setminus [\delta,L-\delta]} m(t,x) \dd x
	+ \int_{\{ m(t) \geq 2K\}} m(t,x) \dd x
	\leq 4K\delta + 2\int_0^L (m_0-K)_+ \dd x
	\end{multline}
	for all $t \in [0,T]$.
	Combine \eqref{eq:I(t) equicty 1} and \eqref{eq:I(t) equicty 2} with Lemmas \ref{lem:I(t) Holder} and 
	\ref{lemma22} to get
	\begin{equation} \label{eq:I(t) equicty 3}
	\left|\left[\int_0^L u_x(t,x)m(t,x) \dd x\right]_{t_1}^{t_2}\right|
	\leq C_\zeta |t_1-t_2|^{1/2} + CK\delta + C\int_0^L (m_0-K)_+ \dd x \ \ \forall t_1,t_2 \in [0,T].
	\end{equation}
	Let $\eta > 0$ be given.
	Set $K$ large enough such that $C\int_0^L (m_0-K)_+ \dd x < \eta/3$, then pick $\delta$ small enough that $CK\delta < \eta/3$.
	Finally, fix $\zeta$ as described above.
	Equation \eqref{eq:I(t) equicty 3} implies that if $|t_1-t_2| < \eta^2/(9C_\zeta^2)$, we have $\left|\left[\int_0^L u_x(t,x)m(t,x) \dd x\right]_{t_1}^{t_2}\right| < \eta$.
	Thus the function $t \mapsto \int_0^L u_x(t,x)m(t,x) \dd x$ is uniformly continuous, and since none of the constants here depend on $\sigma$, the modulus of continuity is independent of $\sigma$.
\end{proof}

We are now in a position to prove an existence result for the first-order system.
\begin{theorem}
	There exists a unique pair $(u,m)$ which solves System \eqref{main system} in the following sense:
	\begin{enumerate}
		\item $u \in W^{1,2}([0,T] \times [0,L]) \cap L^\infty(0,T;W^{1,\infty}(0,L))$ is a continuous solution of the Hamilton-Jacobi equation
		\begin{equation}
		\label{eq:hj1storder}
		u_t - ru + \frac{1}{4}(f(t)-u_x)^2 = 0, \ u(T,x) = u_T(x),
		\end{equation}
		equipped with Neumann boundary conditions, in the viscosity sense;
		\item $m \in L^1\cap C([0,T];\s{P}([0,L]))$ satisfies the continuity equation
		\begin{equation}
		\label{eq:continuity}
		m_t - \frac{1}{2}((f(t)-u_x)m)_x = 0, \ m(0) = m_0,
		\end{equation} equipped with Neumann boundary conditions, in the sense of distributions; and
		\item $f(t) = b + c\int_0^L u_x(t,x)m(t,x)\dd x$ a.e.
	\end{enumerate}
\end{theorem}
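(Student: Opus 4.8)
The plan is a vanishing-viscosity argument, collecting the $\sigma$-uniform estimates of Sections~\ref{sec:analysis of the system} and~\ref{sec:optlctrl}. For $\sigma\in(0,1]$ let $(u_\sigma,m_\sigma)$ be the unique classical solution of \eqref{main system} (Theorem~\ref{thm:existsunique}), and write $f_\sigma(t):=b+c\int_0^L u_{\sigma,x}(t,y)m_\sigma(t,y)\dd y$ and $G_\sigma:=\tfrac12(f_\sigma-u_{\sigma,x})$. By Lemmas~\ref{lemma22}, \ref{lem:ut L2}, \ref{lem:u is holder}, \ref{lem:w2m L1}, \ref{lem:m unif intble} and Corollary~\ref{cor:I(t) equicontinuity}, the bound
$$
\|u_\sigma\|_\infty+\|u_{\sigma,x}\|_\infty+\|u_{\sigma,t}\|_{L^2(Q_T)}+\|u_\sigma\|_{C^{1/3}(\overline{Q_T})}\le C,
$$
the uniform integrability of $\{m_\sigma\}$, the $\tfrac12$-H\"older continuity of $t\mapsto m_\sigma(t)$ in $\s P([0,L])$, and the equicontinuity of $\{f_\sigma\}$ in $C([0,T])$ all hold uniformly in $\sigma$. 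Applying Arzel\`a--Ascoli (in $C(\overline{Q_T})$, in $C([0,T];\s P([0,L]))$ and in $C([0,T])$), Banach--Alaoglu and the Dunford--Pettis theorem, I would extract $\sigma_n\to0$ along which $u_{\sigma_n}\to u$ in $C(\overline{Q_T})$, $u_{\sigma_n,x}\overset{*}{\rightharpoonup}u_x$ in $L^\infty(Q_T)$, $u_{\sigma_n,t}\rightharpoonup u_t$ in $L^2(Q_T)$, $m_{\sigma_n}\to m$ in $C([0,T];\s P([0,L]))$ and weakly in $L^1(Q_T)$, and $f_{\sigma_n}\to f$ in $C([0,T])$. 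This already gives $u\in W^{1,2}(Q_T)\cap L^\infty(0,T;W^{1,\infty}(0,L))$ with $u(T)=u_T$, $m\in L^1\cap C([0,T];\s P([0,L]))$ with $m(0)=m_0$, and $f\in C([0,T])$.

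Next I would pass to the limit in the Hamilton--Jacobi equation. Each $u_\sigma$ is a classical---hence viscosity---solution of the uniformly parabolic equation $u_{\sigma,t}+\tfrac{\sigma^2}{2}u_{\sigma,xx}-ru_\sigma+\tfrac14(f_\sigma(t)-u_{\sigma,x})^2=0$ with terminal datum $u_T$ and Neumann boundary conditions; since $f_{\sigma_n}\to f$ and $u_{\sigma_n}\to u$ uniformly, the standard stability of viscosity solutions under vanishing viscosity yields that $u$ solves \eqref{eq:hj1storder} in the viscosity sense with Neumann boundary conditions, proving~(1). Moreover $u$ is Lipschitz in $x$ and solves a first-order equation whose right-hand side $ru-\tfrac14(f-u_x)^2$ is bounded, hence $u$ is Lipschitz in $(t,x)$ and satisfies \eqref{eq:hj1storder} a.e.; in particular
$$
\tfrac14\bigl(f(t)-u_x\bigr)^2=ru-u_t\qquad\text{a.e. in }Q_T.
$$

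The crux---and the step I expect to be the main obstacle---is the nonlocal coupling: $\int_0^L u_{\sigma,x}m_\sigma\dd x$ is a product of sequences converging only weakly, so the identity $f(t)=b+c\int_0^L u_x m\dd x$ is not automatic. I would resolve it by upgrading $u_{\sigma_n,x}\to u_x$ to \emph{strong} convergence in $L^2(Q_T)$, exploiting the quadratic Hamiltonian. Integrating the equation for $u_\sigma$ over $Q_T$ and using $u_{\sigma,x}(t,0)=u_{\sigma,x}(t,L)=0$ and $u_\sigma(T,\cdot)=u_T$ gives
$$
\tfrac14\int_{Q_T}(f_\sigma-u_{\sigma,x})^2=r\int_{Q_T}u_\sigma-\int_0^L u_T\dd x+\int_0^L u_\sigma(0,x)\dd x,
$$
whose right-hand side, along $\sigma_n\to0$, converges by the uniform convergence of $u_{\sigma_n}$ to $r\int_{Q_T}u-\int_0^L u_T\dd x+\int_0^L u(0,x)\dd x$, which equals $\tfrac14\int_{Q_T}(f-u_x)^2$ upon integrating the displayed a.e.\ identity. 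Hence $\|f_{\sigma_n}-u_{\sigma_n,x}\|_{L^2(Q_T)}\to\|f-u_x\|_{L^2(Q_T)}$; together with the weak convergence $f_{\sigma_n}-u_{\sigma_n,x}\rightharpoonup f-u_x$ in $L^2(Q_T)$, the Hilbert-space fact ``weak convergence plus convergence of norms implies strong convergence'' gives $u_{\sigma_n,x}\to u_x$ in $L^2(Q_T)$, hence (along a further subsequence) a.e. With this in hand, a Vitali-type argument---using $\|u_{\sigma_n,x}\|_\infty\le C$ and the uniform integrability and weak $L^1$ convergence of $\{m_{\sigma_n}\}$---shows $\int_{Q_T}u_{\sigma_n,x}m_{\sigma_n}\psi\to\int_{Q_T}u_x m\,\psi$ for every $\psi\in L^\infty(Q_T)$; taking $\psi=\psi(t)$ and recalling $f_{\sigma_n}\to f$ uniformly yields $f(t)=b+c\int_0^L u_x(t,x)m(t,x)\dd x$ a.e., which is~(3). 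Finally I would pass to the limit in the distributional form of \eqref{main system}(ii): the viscosity term is $O(\sigma^2)$ since $\int_0^L m_\sigma(t)\dd x=1$; the drift term $\tfrac12\int_{Q_T}(f_\sigma-u_{\sigma,x})m_\sigma\phi_x$ converges to $\tfrac12\int_{Q_T}(f-u_x)m\,\phi_x$ by the same Vitali argument together with $f_{\sigma_n}\to f$ uniformly; and the time term converges by the weak $L^1$ convergence of $m_{\sigma_n}$. This gives \eqref{eq:continuity} with Neumann conditions in the sense of distributions, i.e.~(2).

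For uniqueness, given two such solutions $(u_1,m_1)$, $(u_2,m_2)$ with associated $f_1,f_2$, I would repeat the computation of Theorem~\ref{thm:existsunique}---the integration by parts \eqref{eq:uniqueness1} being justified here by an approximation argument using $u_{i,x}\in L^\infty$, $(f_i-u_{i,x})m_i\in L^1$ and the distributional continuity equation---arriving at the first-order analogue of \eqref{eq:uniqueness2} and hence at $f_1\equiv f_2$; then $u_1=u_2$ by the comparison principle for \eqref{eq:hj1storder} with a fixed coefficient $f$, and $m_1=m_2$ by uniqueness for the continuity equation \eqref{eq:continuity}, or equivalently by the strict convexity of $J$ together with the $\sigma=0$ version of Proposition~\ref{prop:minimizers}. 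Since the limit $(u,m)$ is thereby independent of the extracted subsequence, the whole family $(u_\sigma,m_\sigma)$ converges as $\sigma\to0$. The genuinely delicate point is the strong $L^2$ convergence of $u_{\sigma,x}$: it is exactly what lets the nonlocal quadratic coupling pass to the limit, and it relies both on the special form of the Hamiltonian---so that $\int_{Q_T}(f_\sigma-u_{\sigma,x})^2$ is pinned down by the ``energy'' $\int_{Q_T}(ru_\sigma-u_{\sigma,t})$---and on the a.e.\ solvability of the limiting Hamilton--Jacobi equation; a secondary technical difficulty is justifying these integrations by parts (and the $\sigma=0$ version of Proposition~\ref{prop:minimizers}) in the absence of spatial Sobolev regularity for $m$.
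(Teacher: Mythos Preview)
Your existence argument is correct and takes a genuinely different route from the paper. The paper passes to a weak $L^1$ limit $w$ of $m_{\sigma_n}u_{\sigma_n,x}$ and then identifies $w=mu_x$ by exploiting the variational structure of Section~\ref{sec:optlctrl}: it combines the distributional subsolution inequality for the limit Hamilton--Jacobi equation with the weak lower semicontinuity of the convex functional $(m,w)\mapsto\Psi(m,w)=|w|^2/m$ to show that $\int e^{-rt}(\Psi(m,w)+mu_x^2-2u_xw)=0$, whence $w=mu_x$ a.e. Your approach instead upgrades $u_{\sigma_n,x}\to u_x$ to \emph{strong} $L^2$ convergence by computing $\tfrac14\|f_\sigma-u_{\sigma,x}\|_{L^2}^2$ from the integrated Hamilton--Jacobi equation and matching it with the limit (using that the Lipschitz viscosity solution $u$ satisfies the equation a.e.); then the product $u_{\sigma_n,x}m_{\sigma_n}$ converges by an Egorov/Vitali argument against the uniformly integrable $m_{\sigma_n}$. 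This is a clean and more elementary alternative: it does not use the optimal-control interpretation at all, only the specific quadratic form of the Hamiltonian and the Neumann boundary condition (so that the $\sigma^2 u_{xx}$ term integrates to zero). The paper's approach, by contrast, would survive more general convex Hamiltonians and highlights the role of optimality.

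On uniqueness your sketch is essentially the paper's, but you gloss over the one nontrivial point. After the energy identity yields $f_1\equiv f_2$ and hence $u_1\equiv u_2$, you invoke ``uniqueness for the continuity equation \eqref{eq:continuity}''; but the drift $\tfrac12(f-u_x)$ is merely bounded, not Lipschitz, so uniqueness in $L^1$ for \eqref{eq:continuity} is not standard. The paper addresses this explicitly (Lemma~\ref{uniqueness:continuity:equation}): one uses semiconcavity of the viscosity solution of \eqref{eq:hj1storder} (so that $-u_{xx}$ is a bounded measure from above, giving a one-sided Lipschitz/BV bound on the velocity field) together with the Ambrosio superposition principle to conclude. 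Your alternative suggestion---appeal to strict convexity of $J$ and a $\sigma=0$ version of Proposition~\ref{prop:minimizers}---would require first proving that \emph{every} weak solution $(u,m)$ in the stated sense is a minimizer of $J$ over $\s K_0$, which in turn needs exactly the kind of integration-by-parts against a possibly rough $m$ that is delicate here; so it does not really avoid the difficulty. You should state the semiconcavity/transport-uniqueness ingredient explicitly.
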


\begin{proof}
	\emph{Existence:} Collecting Lemmas \ref{lemma22}, \ref{lem:ut L2} \ref{lem:u is holder}, \ref{lem:w2m L1}, \ref{lem:m unif intble}, and Corollary \ref{cor:I(t) equicontinuity}, we can construct a sequence $\sigma_n \to 0^+$ such that if $(u^n,m^n)$ is the solution corresponding to $\sigma = \sigma_n$, we have
	\begin{itemize}
		\item $u^n \to u$ uniformly, so that $u \in C([0,T] \times [0,L])$, and also weakly in $W^{1,2}([0,T] \times [0,L])$;
		\item $u_x^n \rightharpoonup u_x$ weakly$^*$ in $L^\infty$;
		\item $m^n \to m$ in $C([0,T];\s{P}([0,L]))$, so that $m(t)$ is a well-defined probability measure for every $t \in [0,T]$, $m^n \rightharpoonup m$ weakly in $L^1([0,T] \times [0,L])$, and $m^n(T) \rightharpoonup m(T)$ weakly in $L^1([0,L])$;
		\item $m^n u_x^n \rightharpoonup w$ weakly in $L^1$; and
		\item $f_n(t) := b +  c\int_0^L u_x^n(t,x)m^n(t,x)\dd x \to f(t)$ in $C([0,T])$.
	\end{itemize}
	Since $u^n \to u$ and $f_n \to f$ uniformly, by standard arguments, we have that
	\eqref{eq:hj1storder} holds
	in a viscosity sense.
	Moreover, since $u_x^n \rightharpoonup u_x$ weakly$^*$ in $L^\infty$, we also have
	\begin{equation}
	\label{eq:hj1storderinequality}
	u_t - ru + \frac{1}{4}(f(t)-u_x)^2 \leq 0
	\end{equation}
	in the sense of distributions, i.e.~for all $\phi \in C^\infty([0,T] \times [0,L])$ such that $\phi \geq 0$, we have
	\begin{multline} \label{eq:hj1storderindistrib}
	\int_0^L e^{-rT}u_T(x)\phi(T,x)\dd x
	- \int_0^L e^{-rT}u(0,x)\phi(0,x)\dd x
	\\ - \int_0^T \int_0^L e^{-rt}u(t,x)\phi_t(t,x)\dd x \dd t
	+ \frac{1}{4}\int_0^T \int_0^L (f(t)-u_x(t,x))^2 \phi(t,x) \dd x \dd t \leq 0.
	\end{multline}
	(This follows from the convexity of $u_x \mapsto u_x^2$.)
	
	Since $m^n \rightharpoonup m$ and $m^n u_x^n \rightharpoonup w$ weakly in $L^1$, it also follows that
	\begin{equation}
	\label{eq:fp1storder}
	m_t - \frac{1}{2}(f(t)m-w)_x = 0, \ m(0) = m_0
	\end{equation}
	in the sense of distributions.
	For convenience we define $\upsilon := \frac{1}{2}(f(t)m-w)$.
	Extend the definition of $(m,\upsilon)$ so that $m(t,x) = m(T,x)$ for $t \geq T$, $m(t,x) = m_0(x)$ for $t \leq 0$, and $m(t,x) = 0$ for $x \notin [0,L]$; and so that $\upsilon(t,x) = 0$ for $(t,x) \notin [0,T] \times [0,L]$.
	Now let $\xi_\delta(t,x)$ be a standard convolution kernel (i.e.~a $C^\infty$, positive function whose support is contained in a ball of radius $\delta$ and such that $\iint \xi^\delta(t,x) \dd x \dd t = 1$).
	Set $m_\delta = \xi_\delta \ast m$ and $\upsilon_\delta = \xi_\delta$.
	Then $m_\delta,\upsilon_\delta$ are smooth functions such that $\partial_t m_\delta = \partial_x \upsilon_\delta$ in $[0,T] \times [0,L]$; moreover $m_\delta$ is positive.
	Using $m_\delta$ as a test function in \eqref{eq:hj1storderindistrib} we get
	\begin{multline*} 
	\int_0^L e^{-rT}u_T(x)m_\delta(T,x)\dd x
	- \int_0^L e^{-rT}u(0,x)m_\delta(0,x)\dd x
	\\ + \int_0^T \int_0^L e^{-rt}u_x \upsilon_\delta \dd x \dd t
	+ \frac{1}{4}\int_0^T \int_0^L (f(t)-u_x)^2 m_\delta \dd x \dd t \leq 0.
	\end{multline*}
	Using the continuity of $m(t)$ in $\s{P}([0,L])$ from Lemma \ref{lem:w2m L1}, we see that 
	$$\lim_{\delta \to 0^+} \int_0^L e^{-rT}u_T(x)m_\delta(T,x)\dd x = \int_0^L e^{-rT}u_T(x)m(T,x)\dd x,$$ and 
	$\lim_{\delta \to 0^+} \int_0^L e^{-rT}u(0,x)m_\delta(0,x)\dd x = \int_0^L e^{-rT}u(0,x)m_0(x)\dd x.$
	Since $m_\delta \to m$ and $\upsilon_\delta \to \upsilon$ in $L^1$, we have
	\begin{multline*} 
	\int_0^L e^{-rT}u_T(x)m(T,x)\dd x
	- \int_0^L e^{-rT}u(0,x)m_0(x)\dd x
	\\ + \int_0^T \int_0^L e^{-rt}u_x \upsilon  \dd x \dd t
	+ \frac{1}{4}\int_0^T \int_0^L (f(t)-u_x)^2 m \dd x \dd t \leq 0,
	\end{multline*}
	or
	\begin{multline} \label{eq:ibpestimate1}
	\int_0^L e^{-rT}u_T(x)m(T,x)\dd x
	- \int_0^L e^{-rT}u(0,x)m_0(x)\dd x
	\\ + \int_0^T \int_0^L e^{-rt} \left(\frac{1}{4}mu_x^2 - \frac{1}{2}u_x w\right)  \dd x \dd t
	+ \frac{1}{4}\int_0^T \int_0^L f^2(t)m \dd t \leq 0.
	\end{multline}
	Recall the definition of $\Psi(m,w)$ from \eqref{eq:Psi}.
	From \eqref{eq:ibpestimate1} we have
	\begin{multline} \label{eq:ibpestimate2}
	\int_0^L e^{-rT}u_T(x)m(T,x)\dd x
	- \int_0^L e^{-rT}u(0,x)m_0(x)\dd x
	\\
	+ \frac{1}{4}\int_0^T \int_0^L f^2(t)m \dd t \leq 
	\frac{1}{4}\int_0^T \int_0^L e^{-rt}  \Psi(m,w) \dd x \dd t.
	\end{multline}
	On the other hand, for each $n$ we have
	\begin{multline} \label{eq:ibpequation}
	\int_0^L e^{-rT}u_T(x)m^n(T,x)\dd x
	- \int_0^L e^{-rT}u^n(0,x)m_0(x)\dd x
	\\
	+ \frac{1}{4}\int_0^T \int_0^L f_n^2(t)m^{n} \dd t 
	= \frac{1}{4}\int_0^T \int_0^L e^{-rt} m^{n}u_x^2 \dd x \dd t
	= \frac{1}{4}\int_0^T \int_0^L e^{-rt}  \Psi(m^n,m^nu_x^n) \dd x \dd t.
	\end{multline}
	Since $(m^n,m^nu_x^n) \rightharpoonup (m,w)$ weakly in $L^1 \times L^1$, it follows from weak lower semicontinuity that
	\begin{multline} \label{eq:ibpestimate3}
	\int_0^L e^{-rT}u_T(x)m(T,x)\dd x
	- \int_0^L e^{-rT}u(0,x)m_0(x)\dd x
	\\
	+ \frac{1}{4}\int_0^T \int_0^L f^2(t)m \dd t \geq 
	\frac{1}{4}\int_0^T \int_0^L e^{-rt}  \Psi(m,w) \dd x \dd t.
	\end{multline}
	From \eqref{eq:ibpestimate1}, \eqref{eq:ibpestimate2}, and \eqref{eq:ibpestimate3} it follows that
	$$
	\int_0^T \int_0^L e^{-rt}  (\Psi(m,w) + mu_x^2 - 2u_x w) \dd x \dd t = 0,
	$$
	where $\Psi(m,w) + mu_x^2 - 2u_x w$ is a non-negative function, hence zero almost everywhere.
	We deduce that $w = mu_x$ almost everywhere.
	
	Finally, by weak convergence we have 
	\begin{eqnarray}\nonumber
	f(t) = b + c \lim_{n \to \infty} \int_0^L u_x^n(t,x)m^n(t,x) \dd x &=& b + c\int_0^L w(t,x) \dd x \\ \nonumber
	&=& b + c\int_0^L  u_x(t,x)m(t,x) \dd x\quad a.e.
	\end{eqnarray}
	Which entails the existence part of the Theorem.

	\emph{Uniqueness:} The proof of uniqueness is essentially the same as for the second order case, the only 
	difference is the lack of regularity which makes the arguments much more subtle invoking results for transport   
	equations with a non-smooth vector field. Let $(u_{1}, m_1)$ and $
	(u_2,m_2)$ be two solutions of system \eqref{main system} in the sense given above, and let us set $u:=u_{1}-
	u_{2}$ and $m=m_1-m_2$. 
	We use a regularization process to get the energy estimate \eqref{eq:uniqueness2}. Then we get that 
	$u_1\equiv u_2$ and 
	$\int_0^L u_{1,x}m_1=\int_{0}^{L} u_{2,x}m_2$ in $\{ m_{1}>0\}\cup \{ m_2>0\}$, so that $m_1$ and 
	$m_2$ are both solutions to 
	$$
	m_t - \frac{1}{2}((f_{1}(t)-u_{1,x})m)_x = 0, \ \ m(0) = m_0,
	$$
	where $f_{1}(t):=b + c\int_0^L u_{1,x}(t,x)m_{1}(t,x)\dd x$ and $u_{1,x}:=(u_{1})_{x}$.  In orded to conclude that $m_1\equiv m_2$, we  invoke 
	the following Lemma:
	\begin{lemma}\label{uniqueness:continuity:equation}
	Assume that $v$ is a viscosity solution to 
	$$
	v_{t} -rv+\frac{1}{4}(f_{1}(t)-v_{x})^{2}=0, \ \ v(T,x)=u_{T}(x),
	$$
	then the transport equation 
	$$
	m_t - \frac{1}{2}((f_{1}(t)-v_{x})m)_x = 0, \ \ m(0) = m_0
	$$
	possesses at most one weak solution in $L^{1}$.
	\end{lemma}
	The proof of Lemma \ref{uniqueness:continuity:equation} (see e.g. \cite[Section 4.2]{cardaliaguet2010notes}) relies on 
	semi-concavity estimates for the 
	solutions of Hamilton-Jacobi equations \cite{cannarsa2004semiconcave}, and Ambrosio superposition   
	principle \cite{ambrosio2008transport, ambrosio2004transport}. 
	\end{proof}

\bibliographystyle{siam}
\bibliography{C:/mybib/mybib}
\end{document}